\newtheorem{thm}{Theorem}[section]
\newtheorem{lem}[thm]{Lemma}
\newtheorem{cor}[thm]{Corollary}
\newtheorem{prop}[thm]{Proposition}
\theoremstyle{plain}
\newtheorem{main}{Theorem}
\newtheorem{corm}[main]{Corollary}
\theoremstyle{definition}
\newtheorem{rem}[thm]{Remark}
\newtheorem*{ack}{Acknowledgments}
\newtheorem{example}{Example}[section]
\newtheoremstyle{TheoremNum}
        {\topsep}{\topsep}              
        {\itshape}                      
        {}                              
        {\bfseries}                     
        {.}                             
        { }                             
        {\thmname{#1}\thmnote{ \bfseries #3}}
    \theoremstyle{TheoremNum}
\newcommand{\R}{\mathbf{R}}
\newcommand{\Z}{{\mathbf{Z}}}
\newcommand{\C}{{\mathbf{C}}}
\newcommand{\N}{{\mathbf{N}}}
\newcommand{\cp}{\mathbf{CP}}
\newcommand{\hp}{\mathbf{HP}}
\newcommand{\sph}{\mathbf{S}}
\newcommand{\disk}{\mathbf{D}}
\newcommand{\HH}{\mathbf{H}}
\newcommand{\SU}{\operatorname{SU}}
\newcommand{\PU}{\operatorname{PU}}
\newcommand{\SO}{\operatorname{SO}}
\newcommand{\Or}{\operatorname{O}}
\newcommand{\SL}{\operatorname{SL}}
\renewcommand{\H}{\mathbf{H}}
\newcommand{\Psp}{\operatorname{PSp}}
\newcommand{\id}{\operatorname{{id}}}
\renewcommand{\lim}[1]{\mathop{\underset{#1} {\underset \longleftarrow
{\text{\rm lim}}}}}
\newcommand{\Diff}{\operatorname{Diff}}
\newcommand{\T}{\operatorname{T}}
\newcommand{\reg}{\text{reg}}
\newcommand{\triv}{\mathbf{S}^3 \times \mathbf{S}^2}
\newcommand{\ntriv}{\mathbf{S}^3 \tilde{\times} \mathbf{S}^2} 
\newcommand{\sss}{\mathbf{S}^3 \times \mathbf{S}^3}
\def\bs{\backslash}
\newcommand{\lra}{\longrightarrow}
\newcommand{\mcal}[1]{\mathcal{#1}}
\newcommand{\mc}[1]{\mathcal{#1}}
\newcommand{\ve}{\varepsilon}
\newcommand{\vphi}{\varphi}
\newcommand{\In}{\subseteq}
\def\x{\times}
\def\<{\langle}
\def\>{\rangle}
\def\bsm{\begin{smallmatrix}}
\def\esm{\end{smallmatrix}}
\def\bpm{\begin{pmatrix}}
\def\epm{\end{pmatrix}}
\def\bbm{\begin{bmatrix}}
\def\ebm{\end{bmatrix}}
\def\beq{\begin{equation}}
\def\eeq{\end{equation}}
\renewcommand{\leq}{\leqslant}
\numberwithin{equation}{section}
\newcommand{\spacing}[1]{\renewcommand{\baselinestretch}{#1}\large\normalsize}
\begin{document}




\title[Semi-free actions with manifold orbit spaces]{Semi-free actions with manifold orbit spaces}

\author[Harvey]{John Harvey $^{\ast}$}
\address{$^{\ast}$ Department of Mathematics, Swansea University Bay Campus, Fabian Way, Swansea, SA1~8EN, United Kingdom}
\email{john.harvey.math@gmail.com}

\author[Kerin]{Martin Kerin $^{\dagger}$}

\address{$^{\dagger}$ Mathematisches Institut, Universit\"{a}t M\"{u}nster, Einsteinstr. 62, 48149 M\"{u}nster, Germany. }
\email{m.kerin@uni-muenster.de}

\author[Shankar]{Krishnan Shankar $^{\ddagger}$}
\address{$^{\ddagger}$ Department of Mathematics, University of Oklahoma, Norman, OK 73019, USA. }
\email{Krishnan.Shankar-1@ou.edu}

\thanks{$^{\ast}$ Research carried out as part of SFB 878: \emph{Groups, Geometry \& Actions} at the University of M\"unster.}
\thanks{$^{\dagger}$ Received support from SFB 878, as well as the DFG Priority Program \emph{Geometry at Infinity}, KE 2248/1-1.}
\thanks{$^{\ddagger}$ Supported by the National Science Foundation and by SFB 878.}


\subjclass[2010]{55R55, 57S15}
\keywords{circle action, semi-free action, $5$-manifolds, $4$-manifolds, $8$-manifolds.}


\begin{abstract}
In this paper, we study smooth, semi-free actions on closed, smooth, simply connected manifolds, such that the orbit space is a smoothable manifold. We show that the only simply connected $5$-manifolds admitting a smooth, semi-free circle action with fixed-point components of codimension $4$ are connected sums of $\sph^3$-bundles over $\sph^2$. Furthermore, the Betti numbers of the $5$-manifolds and of the quotient $4$-manifolds are related by a simple formula involving the number of fixed-point components. We also investigate semi-free $S^3$ actions on simply connected $8$-manifolds with quotient a $5$-manifold and show, in particular, that there are strong restrictions on the topology of the $8$-manifold.
\end{abstract}

\date{\today}

\maketitle




\normalsize
\thispagestyle{empty}


The action of a Lie group $G$ on a smooth manifold $M$ is said to be \textit{semi-free} if, for any $p \in M$, the isotropy group at $p$ is either all of $G$ (so that $p$ is a fixed point) or trivial. Such actions have been studied extensively for the past several decades in a variety of contexts: see, for example, \cite{bredon}, \cite{churchlamotke}, \cite{levine}, \cite{my}, \cite{montyang} and, more recently, \cite{LL}, \cite{lott}, \cite{mayer}. In general, the quotient $M/G$ will not be a manifold. However, if $G = S^1$ and the fixed-point set $M^{S^1}$ has codimension four, then $M/S^1$ admits a canonical smooth structure. Similarly, if $G = S^3 \cong \SU(2)$ and the fixed-point set $M^{S^3}$ has codimension eight, then $M/S^3$ admits a canonical smooth structure.

In this article, we first investigate $5$-dimensional manifolds admitting smooth, semi-free $S^1$ actions with codimension-$4$ fixed-point sets, and then proceed to do the same for $8$-dimensional manifolds admitting smooth, semi-free $S^3$ actions with fixed-point sets of codimension eight. Throughout, we adopt the convention that the empty connected sum (that is, with zero summands) is the standard sphere.  Unless indicated otherwise, it may be assumed throughout that all manifolds are closed, smooth and simply connected, and that all actions are smooth.  The notation $\sph^d$ will distinguish the standard $d$-dimensional sphere from the spherical Lie groups $S^1$ and $S^3$.

\begin{main}
\label{T:totalspace}
	Let $M$ be a closed, smooth, simply connected, $5$-dimensional manifold and suppose that there is a smooth, semi-free circle action on $M$ so that the fixed-point set $M^{S^1}$ is of codimension four.  Then $M$ is a connected sum of $n+k-1$ $\sph^3$-bundles over $\sph^2$, where $k = b_2(M^*)$ is the second Betti number of the orbit space $M/S^1 = M^*$, and $n \in \N$ is the number of components of $M^{S^1}$.  Moreover, $M$ is spin if and only if $M^*$ (with its canonical smooth structure) is spin.
\end{main}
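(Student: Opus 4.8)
The plan is to extract the local model of the action, compute the homology of $M$ from it, invoke the classification of simply connected $5$-manifolds, and finally compare Stiefel--Whitney classes.

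First I would record the local structure. Since the fixed-point set has codimension four in a $5$-manifold, $M^{S^1}$ is a disjoint union of $n$ circles $c_1,\dots,c_n$; since $M$ is orientable so is $M^* = M/S^1$, and because the action has non-empty fixed-point set on a simply connected manifold, $M^*$ is simply connected, hence a closed, simply connected, smooth $4$-manifold with $H_2(M^*)\cong\Z^k$. Semi-freeness forces the slice representation at each $c_i$ to be $\C^2$ with $S^1$ acting by weights $(\pm1,\pm1)$, so the normal bundle of $c_i$ is a $\C^2$-bundle (necessarily trivial, being oriented over $S^1$), its unit sphere bundle is $S^1\times\sph^3$, and the quotient of the latter is $S^1\times\sph^2$ with $\sph^3\to\sph^2$ a Hopf map. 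Writing $\Sigma\subset M^*$ for the image of $M^{S^1}$ and $B = M^*\setminus\Sigma$, this exhibits $M$ as the union of the solid tubes $N = \bigsqcup_i S^1\times\disk^4$ and the total space $M_0$ of a principal $S^1$-bundle over $B$ whose Euler class $e\in H^2(B)$ restricts to a generator on each linking sphere $\sph^2_i$.

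Next I would compute $H_*(M)$. Removing the codimension-three set $\Sigma$ leaves $B$ simply connected, and Lefschetz duality (via the Thom isomorphism for the normal $\disk^3$-bundle of $\Sigma$) gives $H_*(B)$ torsion-free with $H_2(B)\cong\Z^{n+k}$ and $H_3(B)\cong\Z^{n-1}$, the extra $\Z^n$ in degree two being carried by the linking spheres. Feeding this into the Gysin sequence of $M_0\to B$, using that $e$ is indivisible (it pairs to $\pm1$ with a linking sphere), yields $H_*(M_0)$, and a Mayer--Vietoris argument for $M = M_0\cup_{\partial N}N$ then gives $H_*(M)$ torsion-free with $H_2(M)\cong\Z^{n+k-1}$. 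The point requiring care is the identification of the boundary and Gysin maps with the linking data, which is exactly what guarantees both the indivisibility of $e$ and the absence of torsion. Combined with $\pi_1(M) = 1$, the Smale--Barden classification of simply connected $5$-manifolds then identifies $M$ with a connected sum of $b_2(M) = n+k-1$ copies of $\sph^3$-bundles over $\sph^2$, the spin type recording whether any summand is non-trivial.

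Finally, for the spin statement I would compare Stiefel--Whitney classes through the decomposition. Since $TM_0 = \pi^*TB\oplus\underline{\R}$, one has $w_2(M_0) = \pi^*w_2(B)$, while $w_2(B) = w_2(M^*)|_B$ with $H^2(M^*;\Z/2)\to H^2(B;\Z/2)$ injective (again from the Thom isomorphism), so $w_2(M^*) = 0 \iff w_2(B) = 0$. The subtlety, which I expect to be the main obstacle, is that the Gysin kernel of $\pi^*$ on $H^2(\,\cdot\,;\Z/2)$ is generated by $\bar e = e\bmod 2$, so a priori $w_2(M_0)$ could vanish while $w_2(B) = \bar e\neq 0$. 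This is ruled out precisely because each linking sphere bounds a normal disk in $M^*$, so $\iota_*[\sph^2_i] = 0$ in $H_2(M^*;\Z/2)$ and hence $w_2(B)$ pairs to $0$ with $[\sph^2_i]$, whereas $\bar e$ pairs to $1$; as $n\geq1$ these classes differ, forcing $\pi^*w_2(B) = 0\iff w_2(B) = 0$. Since $w_2(M)$ is detected on the image of $H_2(M_0)\to H_2(M)$ and the tubes $N$ are stably parallelizable, this yields $M$ spin $\iff w_2(M_0) = 0 \iff w_2(M^*) = 0$, as required.
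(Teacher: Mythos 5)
Your proposal is correct and follows essentially the same route as the paper: decompose $M$ into the invariant tubes around the fixed circles and the free part, compute cohomology via Mayer--Vietoris together with the Gysin sequence of the principal $S^1$-bundle over $M^*\setminus F$, and invoke the Barden--Smale classification. Your spin argument is also the paper's: $w_2(M\setminus F)=\pi^*w_2(M^*\setminus F)$, the kernel of $\pi^*$ is generated by the mod-$2$ Euler class, and the two are distinguished by their restrictions to (equivalently, pairings with) the linking spheres, where the Euler class is a generator while $w_2(M^*\setminus F)$, being pulled back from $M^*$, vanishes.
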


For the sake of clarity, we note that Theorem \ref{T:totalspace} says that a $5$-manifold $M$ admitting a semi-free circle action with codimension-$4$ fixed-point set $M^{S^1}$ is given by
$$
M = 
\begin{cases}
\#_{j = 1}^{n+k-1} (\triv) \,, & \text{ if $M$ spin,} \\
(\ntriv) \# (\#_{j = 1}^{n+k-2} (\triv)) \,, & \text{ if $M$ not spin,} \\
\end{cases}
$$
where $\ntriv$ denotes the non-trivial $\sph^3$-bundle over $\sph^2$ and $M^{S^1}$ consists of a disjoint union of $n$ embedded circles.  In Section \ref{S:examples} it will be demonstrated that all of these manifolds admit such an action, although it is difficult to pin down the diffeomorphism type of the orbit space $M^*$.  In fact, we have not ruled out the remote possibility that the actions on $\triv$ and $\ntriv$ described in Section \ref{S:examples} yield exotic smooth structures on $M^*$ (homeomorphic to $\sph^4$ and $\cp^2$, respectively).

By the work of Levine \cite{levine} (see Theorem \ref{T:levine}), it is known that all $4$-dimensional manifolds arise as the quotient of a semi-free circle action on some $5$-manifold.  It is clear from Theorem \ref{T:totalspace} that, by increasing the number of fixed-point components, each $4$-manifold may be obtained via a semi-free circle action from infinitely many different $5$-manifolds.  This is in marked contrast to the situation for principal circle bundles over $4$-manifolds, where it has been proven by Duan and Liang \cite{duanliang} that at most two (simply connected) total spaces are possible for a given base.  Moreover, they showed that the only $5$-manifolds which can admit a free circle action are connected sums of $\sph^3$-bundles over $\sph^2$.  In contrast, \cite{kollar} Koll\'{a}r has determined necessary and sufficient conditions for a $5$-manifold to admit a fixed-point-free circle action, that is, with all isotropy subgroups being finite.  In particular, many additional diffeomorphism types of $5$-manifolds can admit such an action.

In addition to these observations, notice that, if $M^*$ is a simply connected $4$-manifold and $n \in \N$, then Theorem \ref{T:totalspace} (together with Proposition \ref{P:pi1}) ensures that all semi-free actions with $n$ fixed-point components and orbit space $M^*$ occur on a fixed simply connected $5$-manifold.  On the other hand, it is well known that choosing a multiple $m \alpha$, $m \in \Z$, of a primitive $\alpha \in H^2(M^*; \Z)$ yields a principal $S^1$-bundle over $M^*$ whose total space has fundamental group $\Z_{m}$.  Thus, if $b_2(M^*) > 0$, there are infinitely many pairwise non-diffeomorphic, non-simply-connected $5$-manifolds admitting free circle actions with orbit space $M^*$.  Even if one restricts to the simply connected case $m = \pm 1$, the work of \cite{duanliang} shows that, whenever $M^*$ is not spin, there are still two possible diffeomorphism types for total spaces of the corresponding principal $S^1$-bundles over $M^*$.

Given that $5$-manifolds admitting a semi-free circle action with fixed-point set of codimension four are classified by Theorem \ref{T:totalspace}, it is natural to ask whether all such actions can also be classified up to equivariant diffeomorphism.  As it turns out, Levine \cite{levine} achieved just this in arbitrary dimensions, under certain extra hypotheses.  In the present special case, it is possible to simplify his classification.
 
\begin{main}
\label{T:classification}
{\ }
\begin{enumerate}
\item 
\label{setup}
If $M$ is a closed, smooth, simply connected, $5$-dimen\-sional manifold equipped with a smooth, semi-free circle action, with fixed-point set $M^{S^1}$ consisting of $n$ circles and orbit space $M^*$ (with its canonical smooth structure), then there is a canonical element $\bar e \in H^2(M^*; \Z)$ associated to the action.

\item 
\label{equiv}
For $i = 1,2$, let $M_i$ be a $5$-manifold as in \ref{setup}, such that $M_i^{S^1}$ consists of $n_i$ circles, and let $\bar e_i \in H^2(M_i^*; \Z)$ be the element canonically associated to the semi-free circle action.  Then the actions on $M_1$ and $M_2$ are equivariantly diffeomorphic if and only if $n_1 = n_2$ and there exists a diffeomorphism $\Psi : M_1^* \to M_2^*$ such that $\Psi^* (\bar e_2) = \bar e_1$.

\item 
\label{exist}
Given any closed, smooth, simply connected $4$-manifold $M^*$, an $\bar e \in H^2(M^*; \Z)$ and an $n \in \N$, there exists a closed, smooth, simply connected, $5$-dimen\-sional manifold $M$ on which $S^1$ acts smoothly and semi-freely with orbit space $M^*$, such that $\bar e$ is the element canonically associated to the action and the fixed-point set $M^{S^1}$ consists of $n$ circles.
\end{enumerate}
\end{main}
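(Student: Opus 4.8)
The plan is to treat all three parts through a single geometric decomposition of $M$ into its free part and standard linear models around the fixed circles. Write $F = M^{S^1} = \sqcup_{j=1}^n (S^1)_j$, and let $F^* \subset M^*$ be its image. Because the action is semi-free with codimension-four fixed set, the equivariant tubular neighbourhood theorem forces the normal data at each circle to be the standard linear model, namely a rank-two complex bundle with $S^1$ acting by scalar multiplication; such a bundle over a circle is trivial, so each equivariant neighbourhood is $S^1 \times \disk(\C^2)$ with the diagonal scalar action, with quotient $S^1 \times \disk^3$ (the cone on the linking $\sph^2$). Removing these, the free part $M_0 \to M_0^*$ is a principal $S^1$-bundle over the compact $4$-manifold-with-boundary $M_0^* = M^* \setminus \nu(F^*)$, whose boundary components are copies of $S^1 \times \sph^2$ and over which the bundle restricts to the fibrewise Hopf bundle $S^1 \times \sph^3$.

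For part (i) I would define $\bar e$ from the Euler class $e_0 \in H^2(M_0^*;\Z)$ of this principal bundle. The Thom isomorphism for the trivial rank-three normal bundle of $F^*$ shows that $H^2(M^*;\Z) \to H^2(M^* \setminus F^*;\Z) \cong H^2(M_0^*;\Z)$ is injective, with image the kernel of the restriction $\gamma$ to $\partial M_0^* \cong \sqcup_j (S^1 \times \sph^2)$, while the local Hopf model gives $\gamma(e_0) = (1,\dots,1) \in \Z^n$. The rigid local data thus supplies a canonical normalisation along the boundary, and I would define $\bar e$ to be the class in $H^2(M^*;\Z)$ obtained by subtracting this standard boundary contribution from $e_0$ and pulling back along the injection. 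Canonicity of $\bar e$ is precisely the statement that the normal model is forced by semi-freeness; I expect verifying that this normalisation is choice-independent, and that it matches the invariant in Levine's classification (Theorem~\ref{T:levine}), to be the delicate point of this part.

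For part (ii) the forward implication is formal: an equivariant diffeomorphism descends to a diffeomorphism $\Psi \colon M_1^* \to M_2^*$ matching the numbers of fixed circles, so $n_1 = n_2$, and compatible with the orbit maps, so by naturality of the construction in (i) it satisfies $\Psi^*(\bar e_2) = \bar e_1$. For the converse I would reconstruct the equivariant diffeomorphism from $\Psi$. Since $M_i$, hence $M_i^*$, is simply connected (Proposition~\ref{P:pi1}), embedded circles are unknotted with trivial normal bundle, so $\Psi$ may be isotoped to carry $\nu(F_1^*)$ onto $\nu(F_2^*)$ and to restrict to a diffeomorphism $M_{1,0}^* \to M_{2,0}^*$. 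The hypothesis $\Psi^*(\bar e_2) = \bar e_1$ with the standard normalisation gives $\Psi^* e_{2,0} = e_{1,0}$, so the two free-part bundles are isomorphic, yielding an equivariant bundle diffeomorphism $M_{1,0} \to M_{2,0}$ covering $\Psi$; one then extends over the standard models via the equivariant identification with $S^1 \times \disk(\C^2)$. The \emph{main obstacle} of the theorem is exactly this last step: one must adjust the bundle isomorphism by a gauge transformation over each overlap $S^1 \times \sph^3$ so that it agrees with the linear model there, and reconciling these gauge and normal-framing choices on all the overlaps is where the real work lies.

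For part (iii) I would run the construction in reverse. Given $M^*$, a class $\bar e$, and $n \geq 1$, choose $n$ disjoint unknotted circles $F^* \subset M^*$ with standard framings and set $M_0^* = M^* \setminus \nu(F^*)$. Since $M_0^*$ is simply connected (so $\gamma$ is onto by Lefschetz duality), select $e_0 \in H^2(M_0^*;\Z)$ with $\gamma(e_0) = (1,\dots,1)$ and with the prescribed normalisation recording $\bar e$, take the principal $S^1$-bundle with this Euler class, and glue in the standard models $\sqcup_j (S^1 \times \disk(\C^2))$ along the fibrewise Hopf boundary. This produces a smooth, semi-free $S^1$-action on a closed $5$-manifold $M$ with orbit space $M^*$, fixed set $n$ circles, and associated class $\bar e$ by construction. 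Finally I would check simple-connectivity: over a boundary point the regular fibre lies in an $\sph^3$ and is therefore null-homotopic, so $\pi_1(M_0) = 0$, and gluing the models leaves $\pi_1(M) = 0$ by van Kampen.
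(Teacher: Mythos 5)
Your overall architecture (split $M$ along the free part and the standard linear models, then define $\bar e$ by correcting the Euler class $e_0$ of the free part by a boundary normalisation) matches the paper's, but the two places you flag as ``delicate'' are not routine verifications --- they are the entire content of the proof, and as written your proposal does not contain them. First, in part (i) your definition of $\bar e$ is vacuous until the normalisation is actually constructed: to ``subtract the standard boundary contribution'' from $e_0$ you must choose a class $c \in H^2(M_0^*;\Z)$ with $\gamma(c) = (1,\dots,1)$, and any two such choices differ by an element of $\ker\gamma = \im\bigl(H^2(M^*;\Z) \to H^2(M_0^*;\Z)\bigr)$; consequently $\bar e$ is ambiguous up to an \emph{arbitrary} element of $H^2(M^*;\Z)$, i.e.\ totally undetermined. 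The rigid local model pins down only $\gamma(e_0)$, not a lift of it, so ``the rigid local data supplies a canonical normalisation'' is precisely what is false without further input. The missing idea is the paper's Lemma \ref{L:section}: since embedded circles (and collections of disks) in a simply connected $4$-manifold are unique up to ambient isotopy (Haefliger, Chazin, together with Hirsch's isotopy extension theorem), one can isotope $F^*$ into a union $N$ of $n$ disjoint embedded $4$-disks and use the canonical isomorphism $H^2(M^*;\Z) \cong H^2(M^*\setminus N;\Z)$ to manufacture a splitting $\tau$ of the restriction map; canonicity of $\bar e = \tau(e)$ then follows from the uniqueness of those isotopies, not from the rigidity of the normal model alone.

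Second, in parts (ii) and (iii) you propose to rebuild the equivariant diffeomorphism, respectively the manifold $M$, by hand --- matching a bundle isomorphism over the free parts with the linear models over the overlaps $S^1 \times \sph^3$ --- and you correctly identify this gauge-matching as ``where the real work lies,'' but you leave it undone. That step is exactly the content of Levine's theorem (Theorem \ref{T:levine}), which the paper quotes as a black box: the equivariant diffeomorphism type is determined by the diffeomorphism type of the triple $(M^*, F, e)$, and every admissible $e$ is realised by some action. The paper's own contribution is only the translation between Levine's invariant $e \in H^2(M^*\setminus F;\Z)$ and the refined invariant $\bar e \in H^2(M^*;\Z)$, via the canonical section and its naturality under diffeomorphisms preserving the fixed set (Lemma \ref{L:commute}); even that translation requires the orientation bookkeeping showing that the isotoped diffeomorphism pulls back the boundary generators $x_{2\ell}$ to $x_{1\ell}$, which your phrase ``gives $\Psi^* e_{2,0} = e_{1,0}$'' silently assumes. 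So your plan can be completed only by either citing Levine's theorem --- at which point it collapses onto the paper's proof --- or reproving it, which is a substantial piece of equivariant gluing theory that your sketch does not supply.
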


Notice, in particular, that it is not assumed in Theorem \ref{T:classification}\ref{equiv} that $\Psi(M_1^{S^1}) = M_2^{S^1}$.  In other words, the diffeomorphism $\Psi$ is not required to be induced by an equivariant diffeomorphism $M_1 \to M_2$. 

Theorems \ref{T:totalspace} and \ref{T:classification} continue the analysis of semi-free circle actions in low dimensions initiated by Church and Lamotke in \cite{churchlamotke}.  They proved that semi-free circle actions on $4$-manifolds with isolated fixed points are classified up to equivariant diffeomorphism by the number of fixed points.  Furthermore, only connected sums of arbitrarily many copies of $\sph^2 \x \sph^2$ can admit such actions and, as a consequence of Perelman's proof of the Poincar\'e conjecture, it turns out that the orbit space (with its canonical smooth structure) is always diffeomorphic to the standard $3$-sphere.

\medskip

We now turn our attention to the study of semi-free $S^3$ actions on $8$-manifolds with isolated fixed points.  As previously mentioned, the orbit space is a $5$-manifold and admits a canonical smooth structure.  In their article \cite{churchlamotke}, Church and Lamotke showed that all $5$-manifolds can be obtained in this way.  Furthermore, they proved that if $M_1$ and $M_2$ are two $8$-manifolds admitting semi-free $S^3$ actions with the same number $n \in \N$ of isolated fixed points and such that the corresponding orbit spaces $M_1^*$ and $M_2^*$ are diffeomorphic, then $M_1$ and $M_2$ are equivariantly homeomorphic.  

As the classification result of Church and Lamotke does not provide explicit information about the topology of those $8$-manifolds which admit semi-free $S^3$ actions with isolated fixed points, it is natural to seek a general characterization.  Although a complete classification similar to that in Theorem \ref{T:totalspace} seems difficult,  one obtains strong restrictions on a number of invariants for such spaces.

\begin{main}
\label{T:8mnfds}
Let $M$ be a simply connected, closed, smooth $8$-manifold admitting a smooth, semi-free $S^3$ action with fixed-point set $F$ consisting of $n \in \N$ isolated points.  Then the orbit space $M^*$ is a simply connected, closed $5$-manifold which admits a canonical smooth structure such that:
\begin{enumerate}
\item The integral cohomology groups of $M$ satisfy
$$
H^j(M;\Z) \cong 
\begin{cases}
H^2(M^*;\Z), & j=2,5,\\
H^3(M^*;\Z), & j=3,6,\\
\Z^{n-2}, & j=4.
\end{cases}
$$
In particular, the Betti numbers of $M$ satisfy $b_2(M) = b_3(M)$.
\item The Euler characteristic of $M$ is even and given by $\chi(M) = n$.
\item The Pontrjagin classes $p_1(M)$ and $p_2(M)$ are trivial.
\item The $\hat A$-genus and signature of $M$ are trivial.
\item $M$ is spin if and only if $M^*$ is spin.
\item The Euler class of the principal bundle $S^3 \to M\bs F \to M^* \bs F$ is a generator of $H^4(M^* \bs F; \Z)$ which pulls back to a generator of the principal $S^3$-bundle over each component of $M^* \bs F$.
\end{enumerate}
\end{main}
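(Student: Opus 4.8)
The plan is to analyze the semi-free $S^3$ action on $M^8$ by decomposing $M$ into the equivariant structure near the fixed points and the free part, exactly as one does for semi-free circle actions. Near each isolated fixed point $p_i$, the slice theorem gives an invariant tubular neighbourhood equivariantly diffeomorphic to a linear $S^3$-representation on $\R^8 \cong \HH^2$; since the action is semi-free with an isolated fixed point, the representation must be free away from the origin, forcing it to be the standard action of $S^3 = \syp(1)$ on $\HH^2$ by scalar multiplication. Thus each fixed point contributes an equivariant disc bundle whose boundary is $\sph^7$ with the Hopf $S^3$-action, quotient $\sph^4$. Removing open invariant balls around the $n$ fixed points yields $M \bs F$, on which $S^3$ acts freely, giving a principal bundle $S^3 \to M \bs F \to (M^* \bs F)$ over the open $5$-manifold $M^* \bs F$, where $M^*$ has $n$ punctures filled in by cones on $\sph^4$.

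The first main step is to establish the cohomological relationships. I would set up the Gysin sequence for the principal $S^3$-bundle $S^3 \to M \bs F \to M^* \bs F$, whose Euler class $e \in H^4(M^* \bs F; \Z)$ is to be identified in part (vi). Because the fibre is $\sph^3$, the Gysin sequence relates $H^j(M^* \bs F)$ and $H^{j-4}(M^* \bs F)$ to $H^j(M \bs F)$. Combining this with the Mayer–Vietoris sequence that reassembles $M$ from $M \bs F$ and the $n$ disc-bundle neighbourhoods (and likewise $M^*$ from $M^* \bs F$ and $n$ cones), together with Poincar\'e duality on both $M$ and $M^*$, should yield the stated isomorphisms $H^j(M; \Z) \cong H^j(M^*; \Z)$ in the appropriate degrees and the free rank $\Z^{n-2}$ in degree $4$. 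The equality $b_2(M) = b_3(M)$ then follows from Poincar\'e duality on $M^*$ applied to the degree-shifts, and part (ii) follows since $\chi(M) = \sum_j (-1)^j b_j(M)$ collapses once the odd and even contributions are tallied, with the fixed points contributing $n$ via the localization/Lefschetz count for the $S^3$-action.

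For part (vi), I would argue that over each component of $M^* \bs F$ the link of a filled-in fixed point is an $\sph^4$ whose preimage is the Hopf $\sph^7$; restricting the bundle to a small $\sph^4$-link shows the Euler class restricts to a generator, and a connectivity argument (using simple connectivity of $M^*$ and $H^4(\sph^4) \cong \Z$) upgrades this to the claim that $e$ itself generates $H^4(M^* \bs F; \Z)$. Parts (iii), (iv) and (v) are then obtained from the splitting of the tangent bundle of $M$ over the free part as the pullback of $T(M^* \bs F)$ plus the bundle along the fibres: the Pontrjagin classes of $M$ pull back from $M^*$, and dimension/degree reasons (with $M^*$ being $5$-dimensional) force $p_1(M)$ and $p_2(M)$ to vanish; triviality of the $\hat A$-genus and signature follows from vanishing Pontrjagin numbers and the fact that $\dim M = 8$ with $b_4$ purely "transverse," while the spin comparison reduces to comparing $w_2$ of $M$ and $M^*$ through the bundle projection.

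The hard part will be controlling the effect of the singular fibres, i.e.\ correctly gluing the local Hopf data at the fixed points to the global free quotient and verifying that the Gysin–Mayer–Vietoris bookkeeping produces exactly the degree shifts and the precise rank $\Z^{n-2}$ in degree $4$ rather than $\Z^{n-1}$ or $\Z^n$; this requires careful tracking of how the $n$ Euler-class generators, one per fixed point, impose $n-1$ independent relations (with one global relation surviving) in $H^4$. I expect the delicate point to be matching the canonical smooth structure on $M^*$ with the manifold-with-cone-points description so that Poincar\'e duality on $M^*$ may be legitimately invoked, and ensuring simple connectivity of $M^*$ (needed throughout) via van Kampen applied to the decomposition.
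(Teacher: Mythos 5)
Your plan for parts (i), the equality $\chi(M)=n$ in (ii), and parts (v) and (vi) coincides with the paper's proof: the Slice Theorem identifies the action near each fixed point with the cone on the Hopf action of $S^3$ on $\sph^7$, so the restriction of the principal bundle to each link is the Hopf fibration $S^3 \to \sph^7 \to \sph^4$, giving (vi); Mayer--Vietoris for $M^* = (M^* \bs F) \cup D^*(F)$ gives $H^4(M^* \bs F;\Z) \cong \Z^{n-1}$, the Gysin sequence (using that the Euler class generates a $\Z$-summand) gives $H^4(M \bs F;\Z) \cong \Z^{n-2}$, a second Mayer--Vietoris reassembles $M$; and the splitting $T(M \bs F) = \mcal{V} \oplus \pi^* T(M^* \bs F)$ with $\mcal{V}$ parallelizable handles $w_2$ and $p_1$, exactly as in the paper.

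The genuine gap is in part (iii) for $p_2$, in part (iv), and in the evenness assertion of part (ii). You claim that ``dimension/degree reasons'' force $p_2(M)=0$ because the tangent bundle over the free part pulls back from the $5$-dimensional $M^* \bs F$. That argument is vacuous for a top-degree class: $p_2(M)$ lives in $H^8(M;\Z) \cong \Z$, while $H^8(M \bs F;\Z) = 0$ since $M \bs F$ is an open $8$-manifold, so the restriction of $p_2(M)$ to $M \bs F$ vanishes for \emph{every} closed $8$-manifold and carries no information. (Compare $\hp^2$: the restriction of $p_2(\hp^2)$ to the complement of a point is zero, yet $p_2(\hp^2) \neq 0$; and $\hp^2$ is one of the manifolds excluded by Corollary \ref{c:nonegs}, so no argument confined to the free part can suffice.) The pullback argument does legitimately give $p_1(M)=0$, because Mayer--Vietoris shows $\hat\jmath^* : H^4(M;\Z) \to H^4(M \bs F;\Z)$ is an isomorphism; but it cannot reach $p_2$, and consequently your derivation of (iv) from ``vanishing Pontrjagin numbers'' is circular. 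The paper's logic runs in the opposite direction: the vanishing of the signature and of the $\hat A$-genus is imported from index-theoretic results on semi-free $S^3$ actions with isolated fixed points (Mayer \cite{mayer}; cf.\ \cite{atiyahhirzebruch}, \cite{uchida}), and only then is $p_2(M)=0$ deduced from $p_1(M)=0$ and $\sigma(M)=0$ via the Hirzebruch Signature Theorem. Finally, the evenness of $n$ also requires this signature input: the intersection form on $H^4(M;\R) \cong \R^{n-2}$ is nondegenerate with zero signature, hence of even rank, so $n = b_4(M) + 2$ is even. Your Lefschetz/localization count gives $\chi(M)=n$ but says nothing about parity.
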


By taking advantage of the obstructions listed in Theorem \ref{T:8mnfds}, one can quickly rule out the existence of semi-free $S^3$ actions on a large variety of $8$-manifolds.

\begin{corm}
\label{c:nonegs}
The manifolds $\cp^4$, $\hp^2$, $\sph^2 \x \sph^6$, $\sph^3\times \sph^5$ and $\SU(3)$ do not admit a smooth, semi-free $S^3$ action with isolated fixed points.
\end{corm}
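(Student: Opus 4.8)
The plan is to check each of the five manifolds against the obstructions in Theorem \ref{T:8mnfds}, exhibiting for each one a specific invariant that violates a necessary condition.  Since all five candidates are simply connected, closed, smooth $8$-manifolds, the theorem applies verbatim, and it suffices to produce a single contradiction in each case.  The cleanest obstructions to deploy are the vanishing of the Pontrjagin classes (item (iii)), the vanishing of the signature (item (iv)), and the constraint $\chi(M) = n$ together with $b_2(M) = b_3(M)$ (items (i) and (ii)).

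First I would dispatch $\cp^4$ and $\hp^2$ using the signature.  Both are closed, oriented $8$-manifolds with $b_4 > 0$ carrying a positive-definite intersection form on middle cohomology: $H^4(\cp^4;\Z) \cong \Z$ generated by the square of the hyperplane class, giving signature $+1$, and similarly $\sigma(\hp^2) = +1$.  As Theorem \ref{T:8mnfds}(iv) forces $\sigma(M) = 0$, neither can admit such an action.  (Alternatively, one could use (iii): the first Pontrjagin class of $\cp^4$ is $5h^2 \neq 0$, and $p_1(\hp^2) \neq 0$ as well.)  Next, for $\sph^2 \times \sph^6$ I would use item (i): here $b_2 = 1$ but $b_3 = 0$, violating the required equality $b_2(M) = b_3(M)$.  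The same obstruction need not apply directly to $\sph^3 \times \sph^5$, whose odd Betti numbers are $b_3 = b_5 = 1$ and $b_2 = 0$; instead I would invoke item (ii), since $\chi(\sph^3 \times \sph^5) = 0$, whereas $\chi(M) = n \geq 1$ for any action with a nonempty fixed-point set — and the fixed-point set must be nonempty, as otherwise the action would be free and the orbit space would be a $5$-manifold finitely covered by $M$, which is impossible here for dimension reasons or, more simply, because $n \in \N$ in the hypothesis.  (One must take $n \geq 1$; if $n = 0$ is permitted one argues separately that a free $S^3$-action on $\sph^3 \times \sph^5$ would make it the total space of a principal bundle over a $5$-manifold, and checks this is excluded.)

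Finally, for $\SU(3)$ I would again use the Euler characteristic: $\SU(3)$ is a compact Lie group, hence has $\chi(\SU(3)) = 0$, so item (ii) rules it out exactly as for $\sph^3 \times \sph^5$.  Collecting these, each manifold fails at least one necessary condition and therefore admits no smooth, semi-free $S^3$ action with isolated fixed points.

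The main obstacle I anticipate is purely bookkeeping: correctly computing the relevant invariant for each space and matching it to the sharpest applicable clause of Theorem \ref{T:8mnfds}.  In particular, one must be careful that the fixed-point set is genuinely nonempty (so that $\chi(M) = n \geq 1$ gives a contradiction with $\chi = 0$) rather than silently assuming it; the hypothesis $n \in \N$ should be read as guaranteeing this, but a line addressing the free case makes the argument airtight.  No deep input is needed beyond the standard cohomology rings and characteristic classes of these five well-known manifolds.
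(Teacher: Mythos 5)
Your proposal follows exactly the route the paper intends: the paper gives no separate proof of Corollary~\ref{c:nonegs}, treating it as an immediate consequence of the obstructions in Theorem~\ref{T:8mnfds}, and your case-by-case check (signature for $\cp^4$ and $\hp^2$, Betti numbers for $\sph^2 \x \sph^6$, Euler characteristic for $\sph^3 \x \sph^5$ and $\SU(3)$) is precisely that. Two of your side remarks are wrong, however, even though they do not sink the argument. First, item (i) \emph{does} apply directly to $\sph^3 \x \sph^5$: there $b_2 = 0$ while $b_3 = 1$, so the required equality $b_2(M) = b_3(M)$ fails, and you could have dispatched it exactly as you dispatched $\sph^2 \x \sph^6$ (the same Betti-number obstruction also kills $\SU(3)$, which is $2$-connected with $b_3 = 1$). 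Second, and more seriously, your parenthetical claim that, were $n=0$ permitted, one could check that a free $S^3$-action on $\sph^3 \x \sph^5$ ``is excluded'' is false: $\sph^3 \x \sph^5$ is the total space of the trivial principal $S^3$-bundle over $\sph^5$, and $\SU(3)$ fibres as a principal $\SU(2)$-bundle over $\sph^5$, so both manifolds genuinely admit free $S^3$ actions. (Your ``finitely covered by $M$'' remark is also off: the quotient map of a free $S^3$-action is a fibre-bundle projection, not a finite covering.) The corollary is true only because ``with isolated fixed points'' means the fixed-point set is nonempty --- consistent with the paper's convention that $\N$ excludes $0$ (note it writes $\N \cup \{0\}$ elsewhere when zero is allowed) --- so that $\chi(M) = n \geq 1$. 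Since your primary reading adopts exactly this convention, your Euler-characteristic argument stands; simply delete the fallback claim about excluding free actions, as it cannot be repaired.
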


Given that the signature and cohomology of a connected sum are well understood, it is easy to use the examples in Corollary \ref{c:nonegs} to construct infinite families of $8$-manifolds which do not admit a smooth, semi-free $S^3$ action with isolated fixed points.  For example, if $M$ is one of the manifolds in Corollary \ref{c:nonegs}, then the manifold $M \# (\#_{j=1}^m (\sph^4 \x \sph^4))$ does not admit such an $S^3$ action for any $m \in \N$.

Semi-free actions by $S^1$ or $S^3$ with fixed-point sets of codimension four or eight, respectively, are special cases of a more general phenomenon.  Suppose, for example, that a Lie group $G$ acts smoothly on a closed, smooth manifold $M$ such that, locally, there are at most two orbit types.  If the action of a non-principal isotropy subgroup $H$ on the unit normal spheres to the submanifold of orbits of type $(H)$ is, up to an ineffective kernel, the Hopf action of $S^1$ on $\sph^3$ or of $S^3$ on $\sph^7$, respectively, then the orbit space $M/G$ is again a manifold and admits a canonical smooth structure.

As a simple example of this, consider the free product action of $S^1 \x S^3$ on $\sph^3 \x \sph^7$.  This induces an $S^1 \x S^3$ action as above on $\sph^{11}$, by viewing $\sph^{11}$ as the (spherical) join $\sph^3 * \sph^7$.  The orbits of type $(S^1 \x \{1\})$  form a copy of $\sph^7$ (codimension four), while the orbits of type $(\{1\} \x S^3)$ form a copy of $\sph^3$ (codimension eight), and the respective actions on the normal spheres are Hopf actions.  The orbit space, being given by $\sph^2 * \sph^4$, is therefore homeomorphic to $\sph^7$ and can be equipped with a canonical smooth structure.

All of the actions discussed in this work are special cases of fiberings with singularities, also known as Montgomery-Samelson fiberings (see, for example, \cite{Ant1}, \cite{Ant2}, \cite{CT}, \cite{CD}, \cite{Ma}, \cite{MS}, and the generalization in \cite{Hu}).  It would be interesting to know whether the results of this article have analogues in this wider setting.

Our original motivation for studying semi-free actions stems from our interest in positive sectional curvature and the work of Dyatlov \cite{Dy}, in which it was shown that positive curvature is preserved under taking quotients by semi-free actions of the types investigated here.  While Dyatlov's results have yet to yield new examples of positively curved Riemannian manifolds, we have identified strange metrics of positive curvature on $\cp^3$ by considering semi-free circle actions on positively curved Eschenburg spaces of cohomogeneity one.  

The article is organized as follows:  Basic definitions, notation and facts which will be used throughout have been collected in Section \ref{S:Prelim}.  
Section \ref{S:mayervietoris} contains the proof of Theorem \ref{T:totalspace}, while Theorem \ref{T:classification} will be proven in Section \ref{S:thmB}.  Section \ref{S:8mnfds} will deal with the proof of Theorem \ref{T:8mnfds}.  Finally, in Section \ref{S:examples} we give some examples and constructions of semi-free actions with manifold orbit spaces.  

\begin{ack} 
M.\ Kerin would like to thank F.\ Galaz-Garc\'ia for several helpful conversations, while all three authors are grateful to L.\ Kennard for useful comments on an initial draft of the paper.
\end{ack}


\section{Basic definitions and notation}
\label{S:Prelim}

As mentioned in the introduction, the standard $n$-dimensional sphere will be denoted by $\sph^n$ and the non-trivial $\sph^3$-bundle over $\sph^2$ by $\ntriv$, while the spherical Lie groups will be denoted by $S^1$ and $S^3$.  The $n$-dimensional open (unit) disk will be $\disk^n$.

Recall that a smooth, oriented $n$-dimensional manifold $M$ is said to be \emph{spin} if the second Stiefel--Whitney class $w_2(M) \in H^2(M;\Z_2)$ vanishes and that being spin is an invariant of the homotopy type of a closed, smoothable manifold \cite{thom}.  By Poincar\'{e} duality, a closed, simply connected $4$-manifold $M$ always has torsion-free integral cohomology, with the rank of $H^2(M; \Z)$ being given by the second Betti number $b_2(M)$.  
Furthermore, as a consequence of the Barden--Smale diffeomorphism classification \cite{barden, smale}, if a closed, smooth, simply connected $5$-manifold $M$ has $H_2(M; \Z)$ torsion free of rank $k = b_2(M)$, then $M \cong \#_{j= 1}^k (\triv)$, whenever $M$ is spin, that is, whenever $w_2(M)=0$, while $M \cong (\ntriv) \# (\#_{j = 1}^{k-1} (\triv))$, whenever $w_2(M) \neq 0$.

A \emph{smooth action} of a compact Lie group $G$ on a smooth manifold $M$ is a smooth map $G \x M \to M \,;\, (g,p) \mapsto g \cdot p$ such that $G \to \Diff(M) \,;\, g \mapsto (p \mapsto g \cdot p)$ is a homomorphism.  The \emph{isotropy subgroup} of an action at $p \in M$ is $G_p := \{g \in G \mid g \cdot p = p\} \In G$, while the submanifold $G \cdot p := \{ q \in M \mid q = g \cdot p, \, g \in G\} \In M$ is called the \emph{orbit} through $p \in M$ and is diffeomorphic to the quotient $G / G_p$.  An orbit $G \cdot p$ is said to be of \emph{type} $(H)$ if $G_p$ is conjugate to a given closed subgroup $H$ in $G$.  The \emph{orbit space} $M/G$ of an action will be denoted by $M^*$, leading to the notation $p^*$ and $A^*$ for the image under the orbit projection map $\pi: M \to M^*$ of a point $p \in M$ and a set $A \In M$, respectively.

A point $p \in M$ is called a \emph{fixed point} of an action if $G_p = G$.  An action will be called \emph{semi-free} if there are only two orbit types: fixed points and orbits with trivial isotropy, that is, $G_p = G$ or $G_p = \{e\}$ for all $p \in M$.  An orbit of type $(\{e\})$ will be called \emph{principal}.

The set $M^G \In M$ of all fixed points of an action will be denoted by $F$, whenever there is no confusion.  In particular, $F$ is a smooth submanifold of $M$ with image $\pi(F) = F^* \In M^*$ diffeomorphic to $F$.  For this reason, $F$ will usually also be used to refer to $F^*$, and it will be clear from the context whether it is being viewed as a subset of $M$ or $M^*$.

If compact Lie groups $G$ and $G'$ act smoothly on manifolds $M$ and $N$, respectively, and $\vphi : G \to G'$ is a Lie group homomorphism, then a smooth map $f : M \to N$ is said to be \emph{equivariant with respect to} $\vphi$ if $f(g \cdot p) = \vphi(g) \cdot f(p)$ for all $g \in G$ and $p \in M$.  In particular, if $f : M \to N$ is a diffeomorphism and $\vphi: G \to G'$ is an isomorphism, then $f$ is said to be an \emph{equivariant diffeomorphism}, while $M$ and $N$ are said to be \emph{equivariantly diffeomorphic}.  In the special case that $\vphi = \text{id}_G$, the actions will be called \emph{equivalent}.

Suppose now that $S^r$, $r \in \{1, 3\}$, acts smoothly and semi-freely on a smooth manifold $M$.  As the action is free away from the fixed-point set $F \In M$ and the components of $F$ are smooth submanifolds of even codimension, it follows that the orbit space $M^*$ is stratified into smooth submanifolds given by the codimension-zero regular part $M^*_\reg := (M \bs F)/S^r = M^* \bs F$, together with the odd-codimension components of $F = F^*$.  In particular, the link, that is, the space of normal directions, of each component of $F \In M^*$ is either a complex or quaternionic projective space, depending on whether $r = 1$ or $r = 3$.  Although the regular part $M^*_\reg$ inherits a smooth structure from that of $M$, the same is not true for all of $M^*$, in general.

However, if all components of $F \In M$ are of codimension $2(r + 1)$, the corresponding links in $M^*$ are given by $\cp^1 \cong \sph^2$, if $r = 1$, and by $\hp^1 \cong \sph^4$, if $r = 3$.  In this special case, $M^*$ is a topological manifold and, moreover, the smooth structure on $M^*_\reg$ can be extended in a canonical manner to all of $M^*$, yielding the \emph{canonical smooth structure} on $M^*$.  In brief, if $r = 1$ (resp.\ $r = 3$), there is a complex (resp.\ quaternionic) structure on the normal bundle to each component of $F \In M$, and this induces a $\PU(2)$-bundle (resp.\ $\Psp(2)$-bundle) structure on the normal bundle to components of $F \In M^*$.  As the actions of $\PU(2)$ on $\cp^1$ and $\Psp(2)$ on $\hp^1$ are equivariantly diffeomorphic to those of $\SO(3)$ on $\sph^2$ and $\SO(5)$ on $\sph^4$, respectively, there is a canonical way to smooth the orbit space, further details of which can be found in \cite{churchlamotke} and \cite{levine}.  

Throughout the discussions below, it will often be assumed without comment that manifolds are closed, smooth and simply connected and that actions are smooth.


\section{Topology of the \texorpdfstring{$5$}{5}-manifolds}
\label{S:mayervietoris}

The existence of a semi-free circle action on a manifold of any dimension places strong restrictions on the fundamental group, since any orbit can easily be contracted to a point by making use of a path to a fixed point.

\begin{prop}\label{P:pi1}
	Let $M$ be a closed, smooth manifold admitting a semi-free circle action with fixed-point set $F$ of codimension $4$ and orbit space $M^*$. Then $M$ is simply connected if and only if $M^*$ is simply connected.
\end{prop}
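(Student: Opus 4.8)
The plan is to prove the stronger statement that the orbit projection $\pi \colon M \to M^*$ induces an isomorphism $\pi_1(M) \cong \pi_1(M^*)$, from which the proposition is immediate. Since $S^1$ is connected, every orbit is connected, so $M$ and $M^*$ have the same path-components; in particular one is connected exactly when the other is, and it suffices to treat the connected case. I assume throughout that $F \neq \emptyset$, which is implicit in the hypothesis that $F$ has a well-defined codimension; the conclusion genuinely fails for free circle actions, as the circle bundles $L(k,1) \to \sph^2$ with $k \geq 2$ show, where the base is simply connected but the total space has $\pi_1 \cong \Z_k$.

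The first step is to pass to the regular part of the action, where $S^1$ acts freely. The fixed-point set $F$ has codimension four in $M$ by hypothesis, and codimension three in $M^*$, for the link of each component of $F$ in $M^*$ is the quotient $\sph^3/S^1 \cong \cp^1 \cong \sph^2$ of the normal sphere, which is two-dimensional. Since removing a submanifold of codimension at least three does not change the fundamental group (by a standard general-position argument), the inclusions induce isomorphisms $\pi_1(M \setminus F) \cong \pi_1(M)$ and $\pi_1(M^* \setminus F) \cong \pi_1(M^*)$. It therefore remains to compare $\pi_1(M \setminus F)$ with $\pi_1(M^* \setminus F)$. On the regular part the projection $\pi \colon M \setminus F \to M^* \setminus F$ is a principal $S^1$-bundle, and the tail of its homotopy exact sequence reads
$$
\pi_1(S^1) \xrightarrow{\iota_*} \pi_1(M \setminus F) \xrightarrow{\pi_*} \pi_1(M^* \setminus F) \longrightarrow 0 ,
$$
so $\pi_*$ is surjective with kernel the cyclic subgroup generated by the class $[S^1 \cdot p]$ of a principal orbit.

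The crux is to show that this orbit class is trivial. Choosing a path $\gamma$ in $M$ from $p$ to a fixed point $q \in F$, the map $(\theta, t) \mapsto \theta \cdot \gamma(t)$ is a free homotopy in $M$ from the orbit loop $\theta \mapsto \theta \cdot p$ to the constant loop at $q$; hence $[S^1 \cdot p] = 0$ in $\pi_1(M)$. As the orbit already lies in $M \setminus F$ and the inclusion $M \setminus F \hookrightarrow M$ induces an isomorphism on $\pi_1$, it follows that $[S^1 \cdot p] = 0$ in $\pi_1(M \setminus F)$ as well. Thus $\iota_*$ is trivial, $\pi_*$ is an isomorphism, and the chain
$$
\pi_1(M) \cong \pi_1(M \setminus F) \cong \pi_1(M^* \setminus F) \cong \pi_1(M^*)
$$
finishes the argument.

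I expect the only real subtlety to be organizational rather than conceptual: keeping the codimension count straight (in particular that $F$ drops to codimension three in $M^*$, which is precisely what makes the general-position step work downstairs), and being careful that the null-homotopy of the principal orbit is performed in $M$, where it may legitimately sweep through $F$, while its vanishing is transported into $M \setminus F$ purely through the codimension-four isomorphism rather than through a homotopy that avoids $F$.
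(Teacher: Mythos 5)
Your proof is correct, and it uses the same three ingredients as the paper's: general position relative to $F$ (codimension four upstairs, codimension three downstairs), the principal $S^1$-bundle structure on the regular part, and the contraction of a principal orbit along the cone $\{S^1 \cdot \gamma(t)\}$ over a path $\gamma$ to a fixed point. The difference is organizational but genuine. The paper proves the two implications separately: the forward direction ($M$ simply connected $\Rightarrow$ $M^*$ simply connected) is delegated to the path-lifting result of Montgomery and Yang, and the transversality/bundle/cone argument is used only for the converse, where one shows $\pi_1(M^* \setminus F)=1$, deduces that $\pi_1(M \setminus F)$ is generated by the orbit class, pushes loops of $M$ off $F$, and then contracts the orbit inside $M$. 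You instead prove the stronger, unconditional statement that $\pi$ induces an isomorphism $\pi_1(M) \cong \pi_1(M^*)$, which yields both directions simultaneously and makes the citation unnecessary; it also records exactly what fails in the free case, via your explicit use of $F \neq \emptyset$. The price of your formulation is that the nullhomotopy of the orbit, which necessarily sweeps through $F$, must be transported back into $M \setminus F$ through injectivity of $\pi_1(M \setminus F) \to \pi_1(M)$ -- a point you correctly flag -- whereas the paper sidesteps this by contracting the orbit only after all of $\pi_1(M)$ has been reduced to multiples of the orbit class.
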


\begin{proof}
	If $M$ is simply connected then $M^*$ is simply connected, by, for example, the path-lifting result of Montgomery and Yang \cite[Cor.\ 2]{my}.
	
	Conversely, suppose that $M^*$ is simply connected and choose a base point $p^* \in M^*$ corresponding to a principal orbit $S^1 \cdot p \In M$.  Consider two loops in $M ^* \bs F \In M^*$ based at $p^*$ and a homotopy in $M^*$ between them.  As the codimension of $F \In M$ is $4$, the orbit space $M^*$ is a manifold and $F \In M^*$ is a codimension-$3$ submanifold.  By transversality, the homotopy can be made to lie entirely in $M^* \bs F$, which implies that $M^* \bs F$ is also simply connected. 
	
	Now, since $M \bs F$ is the total space of a principal $S^1$-bundle over a simply connected base, its fundamental group is generated by the orbit $S^1 \cdot p$. 	Any loop in $M$ based at $p$ can be homotoped into $M \bs F$ by transversality and, hence, to a multiple of the orbit $S^1 \cdot p$.  Finally, if $c: [0,1] \to M$ is a curve with $c(0) = p$ and $c(1) \in F$, that is, with $c(1)$ a fixed point, then the set $\{S^1 \cdot c(t) \mid t \in [0,1]\}$ describes a $2$-disk in $M$ along which the orbit $S^1 \cdot p$ can be contracted to a point.  Therefore, $M$ is simply connected.
\end{proof}

To begin the proof of Theorem \ref{T:totalspace}, the topology of the orbit space and the number of fixed-point components will be used to garner information about the possible diffeomorphism types for a $5$-manifold which admits a semi-free circle action with fixed-point set of codimension four.  In Section \ref{S:examples} it is demonstrated that all of these diffeomorphism types admit such an action.

\begin{thm}
	Let $M$ be a closed, smooth 5-manifold admitting a semi-free circle action with fixed-point set $F$ consisting of $n$ circles.  If either $M$ or the orbit space $M^*$ is simply connected, then $M$ is a connected sum of $n+k-1$ $\sph^3$-bundles over $\sph^2$, where $b_2(M^*)=k$.
\end{thm}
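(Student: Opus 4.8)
The plan is to reduce everything to the Barden--Smale classification recalled in Section \ref{S:Prelim}: once $M$ is known to be simply connected with $H_2(M;\Z)$ torsion free of rank $n+k-1$, it must be a connected sum of $n+k-1$ copies of an $\sph^3$-bundle over $\sph^2$. By Proposition \ref{P:pi1} the hypothesis that $M$ or $M^*$ is simply connected forces both to be simply connected, so it remains only to compute $H_2(M;\Z)$ and check that it is free of rank $n+k-1$.

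First I would decompose $M$ equivariantly. Let $N$ be an invariant tubular neighborhood of $F = \sqcup_n S^1$. Since $S^1$ acts by complex scalar multiplication on the normal fibers, the normal bundle of each component is a complex rank-$2$ bundle over $S^1$ and hence trivial (its structure group $\U(2)$ is connected), so $N \cong \sqcup_n (S^1 \x \disk^4)$ and $\partial N \cong \sqcup_n (S^1 \x \sph^3)$. Writing $M_0 = M \setminus \operatorname{int} N$, the restriction of $\pi$ makes $M_0 \to \pi(M_0)$ a principal $S^1$-bundle whose base is homotopy equivalent to $B := M^* \setminus F^*$. The induced $\SO(3) = \PU(2)$ structure makes the normal bundle of $F^* = \sqcup_n S^1$ in $M^*$ a trivial $\mathbf{R}^3$-bundle, so a tubular neighborhood of $F^*$ is $\sqcup_n (S^1 \x \disk^3)$.

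Next I would compute $H_*(B)$. Using excision and the Thom isomorphism for this trivial normal bundle, $H_j(M^*, B) \cong \bigoplus_n H_{j-3}(S^1)$, which equals $\Z^n$ for $j = 3$ and vanishes for $j \leq 2$. Feeding this into the long exact sequence of the pair $(M^*, B)$, together with $H_1(M^*)=H_3(M^*)=0$ and $H_2(M^*)=\Z^k$, produces a short exact sequence $0 \to \Z^n \to H_2(B) \to \Z^k \to 0$ as well as $H_1(B)=0$; hence $H_2(B) \cong \Z^{n+k}$ is free. Now I would run the Gysin sequence of the circle bundle $M_0 \to B$, whose Euler class is some $e \in H^2(B)$. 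The essential geometric input is that along each boundary component the action on the normal $\sph^3$ is the Hopf action, so $e$ pairs to $\pm 1$ with the class of each link $2$-sphere; equivalently, the cap product $\cap\, e \colon H_2(B) \to H_0(B) = \Z$ is surjective. The Gysin sequence then gives $H_1(M_0) = 0$ and identifies $H_2(M_0)$ with $\ker(\cap\, e)$, which is free of rank $n+k-1$.

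Finally I would glue via Mayer--Vietoris for $M = M_0 \cup N$, with $M_0 \cap N \simeq \partial N$. Since $H_2(\partial N) = H_2(N) = 0$ and $H_1(M_0)=0$, while the inclusion $\partial N \hookrightarrow N$ induces an isomorphism $H_1(\partial N) \cong H_1(N) \cong \Z^n$, the relevant segment collapses to an isomorphism $H_2(M_0) \cong H_2(M)$, so $H_2(M) \cong \Z^{n+k-1}$ is free of the desired rank. The Barden--Smale classification \cite{barden, smale} then completes the argument. I expect the main obstacle to be the Euler-class computation: one must carefully justify that the semi-free action is Hopf on each normal sphere, so that $e$ is primitive and $\cap\, e$ is surjective onto $H_0(B)$. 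This is precisely what forces the relevant kernels and cokernels to be \emph{free} of the stated ranks; the remaining work is the bookkeeping of the two normal-bundle trivializations and of the gluing maps in Mayer--Vietoris.
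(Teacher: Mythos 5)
Your proposal is correct, and it follows the same overall skeleton as the paper's proof (Proposition \ref{P:pi1} plus Barden--Smale at the ends, the decomposition of $M$ and $M^*$ into a neighbourhood of the fixed-point set and its complement, a Gysin sequence for the free part, and a Mayer--Vietoris gluing), but the two arguments get their traction from genuinely different places. The paper works in cohomology: Mayer--Vietoris for $M^* = (M^* \bs F) \cup D^*(F)$ gives $H^2(M^*\bs F;\Z) \cong \Z^{n+k}$ and $H^3(M^*\bs F;\Z)\cong\Z^{n-1}$; the cohomological Gysin sequence for $S^1 \to M \bs F \to M^* \bs F$ then computes $H^3(M\bs F;\Z) \cong \Z^{2n+k-1}$ using \emph{only} that $H^1(M^*\bs F;\Z) = 0$ and $H^4(M^*\bs F;\Z)=0$ (an open $4$-manifold), with no Euler class analysis whatsoever; finally, Mayer--Vietoris for $M = (M\bs F)\cup D(F)$ closes the computation using $H^4(M;\Z) \cong H_1(M;\Z) = 0$, i.e.\ Poincar\'e duality on $M$ is the decisive input. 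Your homological dualization cannot avoid the Euler class: the surjectivity of $\cap\, e : H_2(B) \to H_0(B)$, which you need both for $H_1(M_0)=0$ and to identify $H_2(M_0) \cong \ker(\cap\, e) \cong \Z^{n+k-1}$, is exactly the primitivity of $e$ on the link $2$-spheres, and you correctly trace this back to the fact that semi-freeness forces the isotropy representation to restrict to the Hopf action on normal $3$-spheres (the same local fact the paper invokes later, in the spin theorem and via Theorem \ref{T:levine}). So your route trades duality upstairs for local geometry of the action; each yields freeness of the relevant group for free, as the kernel of a surjection onto a free abelian group. Two minor remarks: your trivialization of the normal bundles (via connectedness of $\U(2)$, resp.\ $\SO(3)$) is correct but more than is needed, since homotopy equivalence of the intersection pieces with $F \x \sph^2$ and $F \x \sph^3$ suffices for all the exact-sequence arguments; and in the final Mayer--Vietoris step the isomorphism $H_1(\partial N) \cong H_1(N)$ already forces the connecting map $H_2(M) \to H_1(\partial N)$ to vanish, so the computation of $H_1(M_0)$ is not actually used there.
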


\begin{proof}
	By Proposition \ref{P:pi1}, both $M^*$ and $M$ are simply connected.  Therefore, by \cite{barden} and Poincar\'{e} duality, it suffices to show that $H^3(M; \Z) \cong H_2(M; \Z) \cong \Z^{n+k-1}$.
	
	Since $M^*$ is a simply connected $4$-manifold, $H^2(M^*; \Z)$ is torsion-free of rank $k = b_2(M^*)$ and $H^3(M^*; \Z)$ is trivial.  If $D(F) \In M$ denotes a (sufficiently small) $S^1$-invariant neighbourhood of $F \In M$ and $D^*(F) \In M^*$ its image in $M^*$, the Mayer--Vietoris sequence for $M^* = (M^* \bs F) \cup D^*(F)$ yields $H^2(M^* \bs F; \Z) \cong \Z^{n+k}$ and $H^3(M^* \bs F; \Z) \cong \Z^{n-1}$, since the intersection $(M^* \bs F) \cap D^*(F)$ is homotopy equivalent to $F \x \sph^2$, that is, to a disjoint union of $n$ copies of $\sph^1 \x \sph^2$.
	
Now $M \bs F$ is the total space of a principal $S^1$-bundle over $M^* \bs F$ and, by the proof of Proposition \ref{P:pi1}, $M^*\bs F$ is simply connected.	From the corresponding Gysin sequence, one obtains the short exact sequence 
$$
0 \to \Z^{n-1} \to H^3(M \bs F; \Z) \to \Z^{k+n} \to 0 \,,
$$
from which it follows that $H^3(M \bs F; \Z) \cong \Z^{2n+k-1}$.
	
As $M$ is simply connected, Poincar\'{e} duality gives $H^4(M; \Z) \cong 0$.  On the other hand, $M = (M \bs F) \cup D(F)$ and the intersection $(M \bs F) \cap D(F)$ is homotopy equivalent to $F \x \sph^3$, that is, to a disjoint union of $n$ copies of $\sph^1 \x \sph^3$.  Therefore, the Mayer--Vietoris sequence yields the short exact sequence 
$$
0 \to H^3(M; \Z) \to \Z^{2n+k-1} \to \Z^n \to 0\,,
$$ 
from which it may be concluded that $H^3(M; \Z) \cong \Z^{n+k-1}$, as desired.
\end{proof}

To complete the proof of Theorem \ref{T:totalspace}, it remains to determine the behaviour of spin structures under semi-free actions, which is quite different to that for free actions.

\begin{thm}
	Let $M$ be a closed, smooth, simply connected $5$-manifold admitting a semi-free circle action with fixed-point set $F$ consisting of $n$ circles.  Then $M$ is spin if and only if the orbit space $M^*$ is spin.
\end{thm}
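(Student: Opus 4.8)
The plan is to compare the second Stiefel--Whitney classes of $M$ and $M^*$ by restricting to the regular part $M \bs F$, where the principal bundle structure makes the comparison transparent, and then to control what happens across $F$ using the codimensions involved. Since $M \bs F \to M^* \bs F$ is a principal $S^1$-bundle, the vertical tangent bundle is trivialized by the fundamental vector field of the action, so $TM|_{M\bs F} \cong \pi^*T(M^*\bs F) \oplus \underline{\R}$ and hence $w_2(M)|_{M\bs F} = \pi^*\big(w_2(M^*)|_{M^*\bs F}\big)$.

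Next I would show that the restriction maps detect $w_2$. Because $F \subset M$ has codimension $4$, the Thom isomorphism gives $H^j(M, M\bs F; \Z_2) \cong H^{j-4}(F;\Z_2)$, which vanishes for $j = 2, 3$; the long exact sequence of the pair then yields an isomorphism $H^2(M;\Z_2) \cong H^2(M\bs F;\Z_2)$. Similarly, $F \subset M^*$ has codimension $3$, so $H^2(M^*, M^*\bs F;\Z_2) \cong H^{-1}(F;\Z_2) = 0$ and the restriction $H^2(M^*;\Z_2) \to H^2(M^*\bs F;\Z_2)$ is injective. Writing $\alpha := w_2(M^*)|_{M^*\bs F}$, it then follows that $w_2(M^*) = 0$ if and only if $\alpha = 0$, while $w_2(M) = 0$ if and only if $\pi^*\alpha = 0$.

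It then remains to analyze the kernel of $\pi^* : H^2(M^*\bs F;\Z_2) \to H^2(M\bs F;\Z_2)$. The Gysin sequence of the circle bundle, together with the connectedness of $M^*\bs F$, identifies this kernel with $\bar e \cdot H^0(M^*\bs F;\Z_2) = \{0, \bar e\}$, where $\bar e$ is the mod-$2$ reduction of the Euler class. Thus $w_2(M) = 0$ if and only if $\alpha \in \{0, \bar e\}$. Since $w_2(M^*) = 0$ trivially forces $w_2(M) = 0$, the only content left---and the main obstacle---is to rule out the possibility $\alpha = \bar e \neq 0$; that is, to show that $w_2(M^*)|_{M^*\bs F}$ cannot coincide with a nonzero mod-$2$ Euler class.

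I would resolve this by restricting to the link $\sph^2$ of a component of $F$, viewed as the boundary of a normal slice disk $\disk^3 \subset M^*$. On the one hand, $\sph^2 = \partial \disk^3$ bounds in $M^*$ and $TM^*|_{\disk^3}$ is trivial, so $w_2(M^*)|_{\sph^2} = 0$ and hence $\alpha|_{\sph^2} = 0$. On the other hand, the semi-free condition forces the $S^1$-action on the normal $\C^2 \cong \R^4$ to have weights $\pm 1$, so the associated circle bundle over this link is a Hopf bundle and $\bar e|_{\sph^2}$ is the nonzero generator of $H^2(\sph^2;\Z_2)$. Hence $\alpha$ and $\bar e$ differ after restriction to $\sph^2$, so $\alpha \neq \bar e$; combined with $\alpha \in \{0, \bar e\}$ this forces $\alpha = 0$, i.e. $w_2(M^*) = 0$, completing the equivalence.
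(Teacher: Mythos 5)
Your proof is correct and follows essentially the same route as the paper's: the tangent-bundle splitting giving $w_2(M\bs F)=\pi^*w_2(M^*\bs F)$, the fact that restriction to the regular part detects $w_2$ on both $M$ and $M^*$, the Gysin-sequence identification of $\ker\pi^*$ with the span of the mod-$2$ Euler class, and finally distinguishing $w_2(M^*\bs F)$ from that Euler class by restricting near the fixed-point set. The only differences are cosmetic: you use the Thom isomorphism and the long exact sequence of a pair where the paper uses Mayer--Vietoris, and you restrict to a single link $\sph^2$ (which bounds a slice disk, killing $\alpha$, while carrying the Hopf bundle, detecting $\bar e$) where the paper restricts to all of $D^*(F)\bs F \simeq F \x \sph^2$.
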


\begin{proof}
Let $\hat \jmath : M \bs F \to M$ and $j : M^* \bs F \to M^*$ denote the respective inclusion maps.  Then, using the same notation as in the previous proof, the Mayer--Vietoris sequence with $\Z_2$ coefficients for $M = (M \bs F) \cup D(F)$ shows that $\hat \jmath^* : H^2(M; \Z_2) \to H^2(M \bs F; \Z_2) $ is an isomorphism, while that for $M^* = (M^* \bs F) \cup D^*(F)$ yields that $j^* : H^2(M^*; \Z_2) \to H^2(M^* \bs F; \Z_2) $ is injective.  On the other hand, the tangent bundles of $M \bs F$ and $M^* \bs F$ are the pullbacks of the tangent bundles of $M$ and $M^*$ under the respective inclusions.  From $w_2(M \bs F) = \hat \jmath^* w_2(M)$ and $w_2(M^* \bs F) = j^* w_2(M^*)$ it then follows that $M \bs F$ is spin if and only if $M$ is spin, and $M^* \bs F$ is spin if and only if $M^*$ is spin.

The tangent bundle of $M \bs F$ is given by $T(M \bs F) = \mc V \oplus \pi^* T(M^* \bs F)$, where $\mc V$ is the vertical distribution of the principal bundle $S^1 \to M \bs F \stackrel{\pi}{\lra} M^* \bs F$.  As $\mc V$ is parallelizable, the Whitney sun formula yields $w_2(M \bs F) = \pi^* w_2( M^* \bs F)$.

Now, from the Gysin sequence for $S^1 \to M \bs F \stackrel{\pi}{\lra} M^* \bs F$ it is clear that the kernel of $\pi^* : H^2(M^* \bs F; \Z_2) \to H^2(M \bs F; \Z_2)$ is isomorphic to $\Z_2$ and generated by the mod-$2$ reduction of the Euler class $e(\pi)$ of the bundle, that is, by the second Stiefel--Whitney class $w_2(\pi) \in H^2(M^* \bs F; \Z_2)$.  Therefore, it suffices to show that, if $w_2(M^* \bs F) \neq 0$, then $w_2(M^* \bs F) \neq w_2( \pi)$.

To this end, suppose that $w_2(M^* \bs F) \neq 0$ and let $i : D^*(F) \bs F \to M^* \bs F$ denote the inclusion map, where $D^*(F)\bs F =  (M^* \bs F) \cap D^*(F)$.  Exactness of the Mayer--Vietoris sequence for $M^* = (M^* \bs F) \cup D^*(F)$ now yields
$$
i^* w_2(M^*\bs F) = i^*(j^* w_2(M^*)) = 0 \in H^2(D^*(F) \bs F; \Z_2).
$$
On the other hand, as the pullback $S^1 \to D(F) \bs F \stackrel{i^* \pi}{\lra} D^*(F) \bs F$ of the principal bundle $S^1 \to M \bs F \stackrel{\pi}{\lra} M^* \bs F$ is homotopy equivalent to the disjoint union of $n$ copies of the ($\id_{\sph^1} \x$ Hopf)-fibration $\sph^1 \x \sph^3 \to \sph^1 \x \sph^2$, it follows that the image $i^* e(\pi) = e(i^* \pi) \in H^2(D^*(F) \bs F; \Z) \cong \Z^n$ of the Euler class $e(\pi) \in H^2(M^* \bs F; \Z)$ is a generator in each summand.  In particular, reducing mod-$2$ yields
$$
i^* w_2(\pi) = w_2(i ^* \pi) \neq 0 \in H^2(D^*(F) \bs F; \Z_2),
$$
from which it immediately follows that $w_2(M^* \bs F) \neq w_2( \pi)$ and, hence, that $w_2(M \bs F) = \pi^* w_2(M^* \bs F) \neq 0$, as desired.
\end{proof}

As is also seen in Section \ref{S:examples}, the spin structure, or lack thereof, affects the number of fixed-point components of a semi-free circle action.

\begin{cor}
Suppose the fixed-point set $F$ of a semi-free circle action on a closed, smooth, simply connected $5$-manifold $M$ has codimension four and consists of $n$ components.  Then $n \in \{ b_2(M) + 1 - 2j \mid 0 \leq j \leq \lfloor \frac{b_2(M)}{2} \rfloor \}$, if $M$ is spin, while $n \in \{1, \dots, b_2(M)\}$, if $M$ is not spin.  Moreover, all possible $n$ are achieved.
\end{cor}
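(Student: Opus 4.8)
The plan is to read off the admissible values of $n$ directly from Theorem \ref{T:totalspace}, combined with one standard fact about the intersection form of $M^*$, and then to invoke the constructions of Section \ref{S:examples} for realization. By Proposition \ref{P:pi1} and Theorem \ref{T:totalspace}, $M$ is simply connected, $M^*$ is a simply connected $4$-manifold, $M$ is a connected sum of $n+k-1$ copies of $\sph^3$-bundles over $\sph^2$ with $k = b_2(M^*)$, and $M$ is spin if and only if $M^*$ is spin. In particular, since each summand contributes a free $\Z$ to $H_2$,
$$
b_2(M) = n+k-1 , \qquad \text{equivalently} \qquad k = b_2(M) - n + 1 .
$$
As $n \geq 1$ (the action being assumed non-free) and $k = b_2(M^*) \geq 0$, every admissible $n$ already satisfies $1 \leq n \leq b_2(M)+1$, and the entire problem reduces to deciding which values of $k$ are compatible with the (non-)spin condition on $M^*$.

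The key input is that $M^*$ is a closed, simply connected, smooth $4$-manifold, so its intersection form $Q$ on $H^2(M^*;\Z) \cong \Z^k$ is unimodular, and $M^*$ is spin precisely when $Q$ is even. I would then use the elementary fact that an even unimodular form has even rank: reducing $Q$ modulo $2$ produces a non-degenerate alternating bilinear form on $\Z_2^{k}$ (non-degenerate because $Q$ is unimodular), and such a form can exist only when $k$ is even. Hence $M^*$ spin implies $k$ even. For the non-spin case I would instead note only that $M^*$ non-spin forces $Q$ to be non-trivial, so that $k \geq 1$.

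Combining these observations with $1 \leq n \leq b_2(M)+1$ yields the two cases. If $M$ is spin, then $k = b_2(M)-n+1$ is even, i.e.\ $n \equiv b_2(M)+1 \pmod 2$; writing $n = b_2(M)+1-2j$, the bounds $1 \leq n \leq b_2(M)+1$ translate exactly into $0 \leq j \leq \lfloor b_2(M)/2 \rfloor$, which is the asserted set. If $M$ is not spin, then $M^*$ is not spin, so $k \geq 1$ and therefore $n \leq b_2(M)$, giving $n \in \{1,\dots,b_2(M)\}$; here the value $n = b_2(M)+1$ is precisely the one excluded, since it would force $k=0$, making $M^*$ a homotopy $\sph^4$, hence spin, hence $M$ spin, a contradiction.

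It remains to show that every $n$ in the stated ranges is actually realized by a semi-free circle action on the given diffeomorphism type of $M$. This realization is the substantive part of the statement and the place where I expect the main obstacle to lie: unlike the constraint above, which is an immediate consequence of Theorem \ref{T:totalspace} and the lattice fact, it requires exhibiting, for each admissible $n$, an honest action with exactly $n$ fixed circles on the same manifold $M$. I would produce these actions by the explicit equivariant constructions of Section \ref{S:examples}, assembling the required examples from a small stock of model actions via equivariant connected sum operations that adjust the number of fixed circles and the second Betti number of the quotient while preserving both the diffeomorphism type and the spin type of $M$; verifying that these constructions sweep out precisely the sets found above completes the proof.
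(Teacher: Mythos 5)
Your first half --- the constraint on $n$ --- is correct, and it takes a mildly different route from the paper's: the paper derives $b_2(M^*)$ even in the spin case from Rokhlin's theorem \cite{rokhlin} (signature of $M^*$ divisible by $16$, hence even, hence $b_2(M^*)$ even), whereas you use the purely algebraic fact that an even unimodular lattice has even rank. Your version is more elementary and avoids a deep smooth-topology input; both rest on Theorem \ref{T:totalspace} for $b_2(M) = n + b_2(M^*) - 1$ and for ``$M$ spin iff $M^*$ spin'', and the non-spin case ($M^*$ non-spin forces $b_2(M^*) \geq 1$) is identical in both treatments.

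The genuine gap is in the realization step, exactly where you flagged the difficulty: equivariant connected sums of the model actions of Section \ref{S:examples} cannot produce the spin cases with $j \geq 1$. The equivariant connected sum glues along invariant disks centred at fixed points, so every summand must itself carry a fixed point, and the orbit space of the sum is the connected sum of the orbit spaces. Since the models (Examples \ref{E:S5}, \ref{E:spin}, \ref{E:nonspin}) have orbit spaces homeomorphic to $\sph^4$ or $\cp^2$, every action obtained this way has orbit space homeomorphic to a connected sum of copies of $\cp^2$ --- which is never spin once $b_2 > 0$. But realizing $n = b_2(M)+1-2j$ fixed circles on a spin $M$ with $j \geq 1$ requires, by Theorem \ref{T:totalspace}, a \emph{spin} orbit space with $b_2(M^*) = 2j > 0$ (e.g.\ one homotopy equivalent to $\#_j(\sph^2 \x \sph^2)$), so no amount of connected-summing the given stock can ``adjust the second Betti number of the quotient while preserving the spin type of $M$'', contrary to what your sketch asserts. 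The paper points out precisely this limitation at the end of Section \ref{SS:connected sums} and resolves it with a different operation, the equivariant \emph{fibre} sum (Proposition \ref{P:fibresums}): one glues along tubular neighbourhoods of \emph{principal} orbits, which allows a fixed-point-free factor --- a principal $S^1$-bundle over an arbitrary simply connected spin $4$-manifold $B$ --- to be combined with a semi-free model, yielding orbit space homeomorphic to $B$ and the prescribed number of fixed circles; Barden--Smale then identifies the total space with the given spin $M$. Without this fibre-sum construction (or an equivalent), your realization argument covers only the non-spin case and the single spin value $n = b_2(M)+1$.
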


\begin{proof}
By Theorem \ref{T:totalspace}, we have $b_2(M) = n+b_2(M^*)-1$.  Suppose $M$ is not spin.  Then, since a non-spin $4$-manifold has positive second Betti number, it follows that $n \leq b_2(M)$.  By Proposition \ref{P:connsum}, $M$ admits a semi-free circle action with fixed-point set consisting of $n$ circles, for all $n \in \{1, \dots, b_2(M)\}$.

Suppose now that $M$ is spin.  By Rokhlin's Theorem \cite{rokhlin}, the signature of the intersection form of $M^*$ is divisible by $16$.  Therefore, $b_2(M^*)$ must also be even.  By Propositions \ref{P:connsum} and \ref{P:fibresums}, all $b_2(M^*) = 2j$ with $j \in \{0 \leq j \leq \lfloor \frac{b_2(M)}{2} \rfloor\}$ are achieved, from which the claim follows.
\end{proof}


\section{A refined classification}
\label{S:thmB}

Under the hypothesis that the fixed-point set has no $2$-torsion in its second cohomology group, Levine \cite{levine} has given a classification of semi-free circle actions with fixed-point sets of codimension four.  As the fixed-point set of such an action in dimension five is easily understood, this classification can be simplified somewhat.

Recall, first, that a free circle action on a manifold $M$ with orbit space $M^*$ is classified by the Euler class, that is, by an element of $H^2(M^*; \Z)$.  Suppose, for $i = 1,2$, that $M_i$ admits a free circle action with orbit space $M_i^*$ and Euler class $e_i \in H^2(M_i^*; \Z)$, respectively.  Then these actions are equivalent if and only if there is a diffeo\-morphism $\vphi : M_1^* \to M_2^*$ such that $\vphi^*(e_2) = e_1$.  This is illustrated by the following simple examples.

\begin{example} {\ }
\begin{enumerate}
\item  
\label{hopf}
$\sph^3$ admits two free circle actions with orbit space $\sph^2$, having Euler classes $\pm 1 \in H^2(\sph^2; \Z)$, respectively.  As the antipodal map on $\sph^2$ interchanges these two classes, the actions are equivalent.

\item $\sph^3 \x \sph^3$ admits free circle actions with orbit spaces $\triv$ and $\sph^2 \x \sph^3$ and Euler classes $(1, 0) \in H^2( \triv; \Z)$ and $(0, 1) \in H^2( \sph^2 \x \sph^3; \Z)$, respectively. The diffeomorphism $\triv \to \sph^2 \x \sph^3$ which interchanges the factors shows that the actions on $\sph^3 \x \sph^3$ are equivalent.

\item Each of $\triv$ and $\sph^2 \x \sph^3$ admits a free circle action with orbit space $\sph^2 \x \sph^2$, such that the Euler classes are $(1,0), (0,1) \in H^2(\sph^2 \x \sph^2; \Z)$, respectively. The diffeomorphism of $\sph^2 \x \sph^2$ which interchanges the factors shows that these actions are equivalent.
\end{enumerate}
\end{example}

Levine's classification result for semi-free circle actions with fixed-point set of co\-dimension four has a similar flavour.

\begin{thm}[Levine \cite{levine}]
\label{T:levine}
{\ }

\begin{enumerate}
\item Suppose that $S^1$ acts smoothly and semi-freely on a smooth, closed manifold $M$ with orbit space $M^*$, such that the fixed-point set $F \In M$ has codimension four and $H^2(F; \Z)$ is $2$-torsion free.  Let $i: \sph^2 \to M^* \bs F$ be the inclusion of a fibre of the normal sphere-bundle of $F \In M^*$and let $e \in H^2(M^* \bs F; \Z)$ be the Euler class of the principal bundle $S^1 \to M \bs F \to M^* \bs F$.  Then $H^2(\sph^2; \Z)$ is generated by $i^* e$ and, furthermore, the $S^1$ action on $M$ is determined up to equivariant diffeomorphism by the diffeomorphism type of $(M^*, F, e)$.
\item Suppose a closed, smooth manifold $M^*$ has a submanifold $F \In M^*$ of codimension three, with $H^2(F; \Z)$ $2$-torsion free and having oriented normal bundle.  Suppose further that there is an element $e \in H^2 (M^* \bs F; \Z)$ such that the pullback $i^* e$ under any inclusion $i: \sph^2 \to M^* \bs F$ of a fibre of the normal sphere-bundle of $F \In M^*$ is a generator of $H^2(\sph^2; \Z)$.  Then there exists a closed, smooth manifold $M$ admitting a semi-free $S^1$ action with orbit space $M^*$, fixed-point set $F \In M$, and such that the principal bundle $S^1 \to M \bs F \to M^* \bs F$ has Euler class $e$.
\end{enumerate}
\end{thm}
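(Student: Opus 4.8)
\textit{Strategy.} The plan is to separate $M$ into the free part away from $F$, where the action is principal and governed by its Euler class, and an equivariant tubular neighbourhood of $F$, where the action is modelled on the Hopf action; the content of the theorem is that these two pieces, and the way they fit together, are captured by the triple $(M^*, F, e)$.

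First I would record the local model. Along $F$ the normal bundle $\nu$ of $F \In M$ has rank four and carries the semi-free $S^1$-action, whose infinitesimal generator is a fibrewise endomorphism squaring to $-\Id$; this endows $\nu$ with a canonical complex structure, making it a rank-two complex bundle $E \to F$ on which $S^1$ acts by scalar multiplication. The induced action on each unit normal sphere $\sph^3$ is then the Hopf action, so the restriction of the principal bundle $S^1 \to M \bs F \to M^* \bs F$ to a normal fibre $\sph^2 = \cp^1$ is the Hopf bundle; its Euler class generates $H^2(\sph^2; \Z)$, which proves the first assertion of (i). As explained in Section \ref{S:Prelim}, passing to the quotient identifies the rank-three oriented normal bundle $\nu^*$ of $F \In M^*$ with the $\SO(3) \cong \PU(2)$-bundle associated to $E$ under $\U(2) \to \PU(2)$, and the link of $F$ in $M^*$ with the projectivisation $\p(E)$.

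For the uniqueness statement in (i), suppose two such actions share the triple $(M^*, F, e)$. Away from $F$ the principal bundles are classified by $e \in H^2(M^* \bs F; \Z)$, so an equivariant bundle isomorphism over the identity of $M^* \bs F$ exists there. Near $F$ I would reconstruct $E$ intrinsically: the restriction of $e$ to the link $\p(E) = S(\nu^*)$ is the Euler class of the tautological circle bundle $S(E) \to \p(E)$, from which the pair $(\p(E), \xi)$—and hence the complex bundle $E$ itself—can be recovered. By the equivariant tubular neighbourhood theorem, $E$ determines the action on a neighbourhood of $F$ up to equivariant diffeomorphism, and one then glues the two pieces along the boundary $\sph^3$-bundle $S(E)$, where both descriptions agree. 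The delicate point is that $\nu^*$ alone only determines $E$ up to a twist $E \mapsto E \otimes L$ by a complex line bundle, shifting $c_1(E)$ by $2c_1(L)$ and the tautological class by $\pi^* c_1(L)$; matching $e$ is meant to remove this ambiguity, and verifying that it does so precisely—so that the lift $E$ of the $\PU(2)$-structure is uniquely pinned down—is where the hypothesis that $H^2(F;\Z)$ be $2$-torsion free enters. I expect this obstruction-theoretic step, rather than the gluing itself, to be the main difficulty.

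Finally, for the existence statement in (ii) I would run the construction in reverse. Starting from the oriented rank-three normal bundle of $F \In M^*$, the hypothesis that $i^* e$ generates $H^2(\sph^2; \Z)$ for every normal fibre guarantees a fibrewise Hopf structure, and—again using that $H^2(F;\Z)$ is $2$-torsion free—lifts the associated $\PU(2)$-bundle to a complex rank-two bundle $E$ whose tautological class realises $e$ on the link. One then forms the disk bundle $D(E)$ with its fibrewise scalar $S^1$-action, builds the principal $S^1$-bundle over $M^* \bs F$ with Euler class $e$, and glues them along $S(E)$ to obtain the desired $M$ with its semi-free action; by construction the fixed-point set is $F$ and the free part has Euler class $e$.
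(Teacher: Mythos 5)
You should know at the outset that the paper contains no proof of this statement: Theorem \ref{T:levine} is Levine's classification theorem, quoted from \cite{levine} and used as a black box in Section \ref{S:thmB} to deduce Theorem \ref{T:classification}. There is therefore no in-paper argument to compare yours against; what can be judged is whether your sketch would stand on its own as a proof of Levine's result. Its architecture --- the equivariant tubular neighbourhood of $F$ encoded by a rank-two complex bundle $E$ with scalar $S^1$-action, the free part encoded by the Euler class $e$, and a gluing along the sphere bundle $S(E)$ --- is certainly the right shape, and your derivation of the first assertion of (i), that $i^*e$ generates $H^2(\sph^2;\Z)$, is correct (it parallels what the paper does in Lemma \ref{L:EulerClass} for the $S^3$ case).

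As a proof, however, the proposal has two genuine gaps. First, the gluing is asserted (``where both descriptions agree'') rather than proved: the identification of tubular neighbourhoods and the principal-bundle isomorphism over $M^*\bs F$ restrict to two equivariant maps $S(E_1) \to S(E_2)$ covering the same map of links, and these differ by a gauge transformation $g : \p(E_1) \to S^1$ which must be absorbed into one of the two sides. This can be done --- $H^1(\p(E_1);\Z) = \pi^*H^1(F;\Z)$, so up to homotopy $g$ is a fibrewise scalar, which extends over the disk bundle, and a nullhomotopic remainder extends as well --- but it is a step with real content, and in classification theorems of this kind it is where the work lives. Second, and more seriously, your account of the $2$-torsion hypothesis does not hold together. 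In your own setup the twist $E \mapsto E \otimes L$ changes the restriction of $e$ to the link by $\pi^* c_1(L)$, and $\pi^* : H^2(F;\Z) \to H^2(\p(E);\Z)$ is \emph{injective} by Leray--Hirsch; so matching $e$ kills $L$ outright, with no torsion hypothesis at all. If that were the whole story the hypothesis would be superfluous, which should have been a red flag. Where $2$-torsion genuinely intervenes is in an ambiguity your sketch never confronts: the identification of the normal data of $F \In M^*$ with the pair $(\p(E),\xi)$ is canonical only up to automorphisms of the $\SO(3)\cong\PU(2)$-bundle, and such an automorphism $\alpha$ shifts $\xi$ by $\pi^*\delta(\alpha)$, where $\delta(\alpha)\in H^2(F;\Z)$ is a $2$-torsion (Bockstein) class --- for the product bundle, $\delta(\alpha)=\alpha^*c$ with $c$ the generator of $H^2(\SO(3);\Z)\cong\Z_2$. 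Equivalently, if one can only match Chern classes rather than the class $\xi$ itself, the twist is constrained only by $2c_1(L)=0$. So without the hypothesis the quotient data pins down $E$ only up to twisting by $2$-torsion line bundles, and ruling this out --- or showing it is harmless for the particular identification induced by the given diffeomorphism of triples --- is exactly the obstruction-theoretic step you defer as ``the main difficulty.'' Until that step is carried out, the proposal is a plausible strategy, not a proof.
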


The present goal is to reduce Theorem \ref{T:levine} to Theorem \ref{T:classification} in dimension $5$.  The following lemmas will facilitate this.

\begin{lem}
\label{L:difftype}
For $i = 1,2$, let $M_i^*$ be a closed, smooth,  oriented $4$-manifold and $F_i \In M_i^*$ a disjoint union of $n_i$ embedded circles.  Then $M^*_1 \bs F_1$ is diffeomorphic to $M^*_2 \bs F_2$ if and only if $n_1 = n_2$ and there is a diffeomorphism $\Psi : M^*_1 \to M^*_2$. 
\end{lem}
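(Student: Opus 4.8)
The plan is to read off both the number $n_i$ and the diffeomorphism type of $M_i^*$ from the complement $M_i^* \bs F_i$ alone, and, in the converse direction, to exploit that (by the standing convention of the paper we may assume $M_i^*$ simply connected) a union of embedded circles in a simply connected $4$-manifold has a unique isotopy class. First I would record the local model: since $M_i^*$ is oriented and each circle of $F_i$ has trivial, oriented, rank-three normal bundle, a tubular neighbourhood of $F_i$ is a disjoint union of $n_i$ copies of $S^1 \x \disk^3$ (closed $3$-disk), glued along $S^1 \x \sph^2$. Consequently $M_i^* \bs F_i$ is an open manifold with exactly $n_i$ ends, each a product end diffeomorphic to $S^1 \x \sph^2 \x \R$.

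For the forward implication, I would take a diffeomorphism $g \co M_1^* \bs F_1 \to M_2^* \bs F_2$. The number of ends is a diffeomorphism invariant, whence $n_1 = n_2 =: n$ and $g$ matches the ends in pairs. Each $M_i^*$ is recovered from $M_i^* \bs F_i$ by capping every end with a copy of $S^1 \x \disk^3$ along its boundary $S^1 \x \sph^2$, so, after an isotopy supported near infinity straightening $g$ on the product ends, transporting the caps of $M_2^*$ through $g$ reglues caps to $M_1^* \bs F_1$ by self-diffeomorphisms of $S^1 \x \sph^2$. The crucial point is that every such self-diffeomorphism extends over the cap $S^1 \x \disk^3$. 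By Hatcher's determination of $\Diff(S^1 \x \sph^2)$, the mapping class group $\pi_0 \Diff(S^1 \x \sph^2)$ is generated by the reflection of the $S^1$-factor, the reflection of the $\sph^2$-factor, and the twist $(\theta, x) \mapsto (\theta, R_\theta\, x)$ associated with a generator $R$ of $\pi_1(\SO(3))$; the reflections extend by a reflection of $\disk^3$, respectively of $S^1$, and the twist extends by $(\theta, v) \mapsto (\theta, R_\theta\, v)$ since $R_\theta \in \SO(3)$ acts linearly on $\disk^3$. As isotopic gluings yield diffeomorphic results, the transported capping agrees with that of $M_1^*$, giving a diffeomorphism $\Psi \co M_1^* \to M_2^*$.

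For the converse, I would assume $n_1 = n_2 = n$ and fix a diffeomorphism $\Psi \co M_1^* \to M_2^*$. Then $\Psi$ restricts to a diffeomorphism $M_1^* \bs F_1 \to M_2^* \bs \Psi(F_1)$, so it suffices to compare $M_2^* \bs \Psi(F_1)$ with $M_2^* \bs F_2$; that is, to show that any two unions of $n$ disjoint embedded circles in the simply connected $4$-manifold $M_2^*$ are ambiently isotopic. Simple connectivity is what makes this work: every embedded circle is nullhomotopic, hence any two are homotopic as maps, and because a $1$-manifold sits in a $4$-manifold strictly inside the metastable range there are no knotted circles, so homotopic embeddings are isotopic (Haefliger--Zeeman unknotting). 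Moreover two disjoint circles cannot be linked, since a spanning disk for one component meets the other in dimension $2 + 1 - 4 < 0$ and can thus be made disjoint from it by general position; hence a union of $n$ circles is ambiently isotopic to $n$ standard unknots in disjoint balls. An ambient isotopy carrying $\Psi(F_1)$ to $F_2$ then supplies a diffeomorphism $M_2^* \bs \Psi(F_1) \to M_2^* \bs F_2$, and composing with $\Psi$ finishes the argument.

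I expect the forward direction to be the main obstacle, as it hinges on the fact that filling an $S^1 \x \sph^2$ end by $S^1 \x \disk^3$ is insensitive to the gluing, i.e.\ that $\pi_0 \Diff(S^1 \x \sph^2)$ is generated by classes extending over the cap. It is worth emphasising that simple connectivity cannot be dropped in the converse: in $T^4$ the complement of a homotopically essential circle and that of a nullhomotopic one already have different second Betti numbers, so they are not diffeomorphic even though the ambient manifolds are.
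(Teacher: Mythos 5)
Your proof is correct and has essentially the same architecture as the paper's: both trivialize the normal tubes (you via clutching for oriented rank-$3$ bundles over $\sph^1$, the paper via Hatcher's $\Diff(\disk^3)\simeq\Or(3)$), both prove the forward direction by cutting along $\coprod(\sph^1\x\sph^2)$ and recapping with $\coprod(\sph^1\x\disk^3)$, and both prove the converse by combining uniqueness up to ambient isotopy of $n$-component circle links in a simply connected $4$-manifold with the isotopy extension theorem. The differences lie in how the two delicate steps are discharged. In the forward direction the paper arranges the gluing maps to coincide outright ($\vphi\circ\beta_1=\beta_2$, the tubes being chosen compatibly with $\vphi$) and then quotes Hirsch's gluing theorem, so it never needs $\pi_0\Diff(\sph^1\x\sph^2)$; you instead allow the regluings to differ by an arbitrary self-diffeomorphism of $\sph^1\x\sph^2$ and kill the ambiguity using the mapping class group computation (due to Gluck; Hatcher for the full homotopy type), checking that each generator extends over $\sph^1\x\disk^3$. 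These are equivalent ways of handling the same issue, and note that both rest on the same implicit step --- isotoping the complement diffeomorphism so that it carries one system of tube boundaries onto the other (your ``straightening near infinity''); your write-up is, if anything, more explicit than the paper's about what must be checked there. In the converse, where the paper cites Haefliger and Chazin for the unique isotopy class of circle links, you rederive it from Haefliger--Zeeman unknotting plus a general-position unlinking argument, which is precisely the content of Chazin's theorem; and your closing observation that simple connectivity (supplied by the paper's standing convention) cannot be dropped, illustrated by the $T^4$ example, is correct --- the lemma read literally with only ``oriented'' would be false.
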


\begin{proof}
By the work of Hatcher \cite{hatcher}, the diffeomorphism group of the $3$-disk $\disk^3$ is homotopy equivalent to $\Or(3)$.  Therefore, every oriented $\disk^3$-bundle over $\sph^1$ must be trivial.  In particular, each component of the (oriented) normal disk-bundle $D^*(F_i)$ to $F_i \In M^*_i$ is trivial.  Hence, for each $i = 1,2$, there is a diffeomorphism 
\beq
\label{E:beta}
\beta_i : \coprod_{j=1}^{n_i} (\sph^1 \x \sph^2) \cong \partial D^*(F_i)  
\to \partial(M_i^* \bs D^*(F_i))
\eeq
such that $M_i^* \cong (M_i^* \bs D^*(F_i)) \cup_{\beta_i} \coprod_{j=1}^{n_i} (\sph^1 \x \disk^3)$.

Suppose now that there is a diffeomorphism $\vphi : M_1^* \bs F_1 \to M_2^* \bs F_2$.  In particular, this forces $n_1 = n_2 =: n$, since $\vphi \circ \beta_1 = \beta_2$.  Moreover, by \cite[Chap.\ 8, Thm 2.2]{Hi}, the fact that the diffeomorphism
$$
\beta_2 \circ \beta_1^{-1} = \vphi|_{\partial(M_1^* \bs D^*(F_1))} : \partial(M_1^* \bs D^*(F_1)) \to \partial(M_2^* \bs D^*(F_2))
$$
extends to a diffeomorphism $M_1^* \bs D^*(F_1) \to M_2^* \bs D^*(F_2)$ (namely, $\vphi$ itself) implies that there is a diffeomorphism 
\beq
\label{E:Phi}
\Phi : M_1^* \cong (M_1^* \bs D^*(F_1)) \cup_{\beta_1} \coprod_{j=1}^{n} (\sph^1 \x \disk^3) 
\to (M_2^* \bs D^*(F_2)) \cup_{\beta_2} \coprod_{j=1}^{n} (\sph^1 \x \disk^3) \cong M_2^*
\eeq
which restricts to $\vphi$ on $M_1^* \bs D^*(F_1)$ and to the identity map on $\coprod_{j=1}^{n} (\sph^1 \x \disk^3)$.  In addition, taking into account the identifications $D^*(F_i) \cong  \coprod_{j=1}^{n} (\sph^1 \x \disk^3)$, $i=1,2$, it is immediate that $\Phi(F_1) = F_2$.

Suppose, on the other hand, that $\Psi : M_1^* \to M_2^*$ is a diffeomorphism and $n_1 = n_2$.  By the work of Haefliger \cite{haef} and Chazin \cite{chaz}, there is a unique isotopy class of embeddings of $\coprod_{j=1}^{n_1} \sph^1$ into $M_1^*$.  In particular, $F_1$ and $\Psi^{-1}(F_2)$ are isotopic submanifolds of $M_1^*$.  Hence, by \cite[Chap.\ 8, Thm 1.3]{Hi}, there is an ambient isotopy $h: M_1^* \x [0,1] \to M_1^*$ such that $h_0 = \id_{M_1^*}$ and $h_1(F_1) = \Psi^{-1}(F_2)$.  Therefore, the diffeomorphism $\Psi$ is isotopic to a diffeomorphism $\Phi := \Psi \circ h_1 : M_1^* \to M_2^*$ satisfying $\Phi(F_1) = F_2$.  In particular, the restriction $\Phi|_{M^*_1 \bs F_1} : M^*_1 \bs F_1 \to M^*_2 \bs F_2$ is a diffeomorphism.
\end{proof}

Suppose that $M$ is a $5$-manifold on which $S^1$ acts semi-freely with orbit space $M^*$ and fixed-point set $F \In M$ consisting of $n$ circles.  Since $M$ is simply connected, so too are $M^*$ and $M^* \bs F$, by Proposition \ref{P:pi1} (and its proof).  By Poincar\'e duality, $H^*(M^*; \Z)$ is therefore torsion free.  From the Mayer--Vietoris sequence for $M^* = (M^* \bs F) \cup D^*(F)$ there is a short exact sequence
\beq
\label{E:MVH2}
0 \to H^2(M^*; \Z) \stackrel{j^*}{\lra} H^2(M^* \bs F; \Z) \stackrel{\eta^*}{\lra} H^2(S^*(F); \Z) \to 0 \,,
\eeq
where $j : M^* \bs F \to M^*$ and $\eta: S^*(F) \to M^* \bs F$ are the respective inclusion maps and $S^*(F) \cong F \x \sph^2 \In (M^* \bs F) \cap D^*(F)$ is the normal sphere-bundle to $F \In M^*$.  In particular, $H^2(S^*(F); \Z) \cong \Z^n$ and, hence, the sequence splits.

\begin{lem}
\label{L:section}
The short exact sequence \eqref{E:MVH2} admits a canonical section $\tau: H^2(M^*\bs F; \Z) \to H^2(M^*; \Z)$ with $\tau \circ j^* = \id_{H^2(M^*; \Z)}$.
\end{lem}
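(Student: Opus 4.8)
The plan is to obtain the splitting from Poincar\'e--Lefschetz duality together with the unimodular intersection form of $M^*$, using the Euler class of the action to make the choice canonical.

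First I would dualise \eqref{E:MVH2}. Since $M^*$ and, by Proposition \ref{P:pi1} and its proof, $M^* \bs F$ are simply connected, every group occurring is free abelian, so applying $\Hom(-,\Z)$ and invoking the universal coefficient theorem converts \eqref{E:MVH2} into the short exact sequence
$$
0 \to H_2(S^*(F); \Z) \stackrel{\eta_*}{\lra} H_2(M^* \bs F; \Z) \stackrel{j_*}{\lra} H_2(M^*; \Z) \to 0 \,,
$$
where $\eta_*$ and $j_*$ are induced by the inclusions. A left inverse $\tau$ of $j^*$ is the same datum as a section of $j_*$, i.e.\ a complement of $\im \eta_* \cong \Z^n$ inside $H_2(M^* \bs F; \Z)$, and $\im \eta_*$ is spanned by the classes of the normal $2$-spheres to the components of $F$. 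Passing to the compact manifold-with-boundary $W := M^* \bs D^*(F) \simeq M^* \bs F$, with $\partial W = S^*(F)$, I would identify $\im \eta_*$ with the radical of the intersection form of $W$, and note that the form induced on $H_2(M^* \bs F; \Z)/\im \eta_* \cong H_2(M^*; \Z)$ is exactly the unimodular intersection form of $M^*$.

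To pin down the complement I would bring in the Euler class $e \in H^2(M^* \bs F; \Z)$ of the bundle $S^1 \to M \bs F \lra M^* \bs F$. By the codimension-four hypothesis (cf.\ Theorem \ref{T:levine}) its restriction $\eta^* e$ is a generator on each normal $2$-sphere, so $e$ evaluates to $1$ on each class in $\im \eta_*$. Combining this with the unimodularity of the intersection form of $M^*$, I would produce, for each component of $F$, a preferred relative (Seifert-type) dual class and normalise it against $e$, thereby singling out a complement of $\im \eta_*$ and hence the desired $\tau$ with $\tau \circ j^* = \id_{H^2(M^*; \Z)}$.

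The crux is canonicity, not mere existence: any two splittings of \eqref{E:MVH2} differ by an element of $\Hom(\Z^n, H^2(M^*; \Z))$, and this ambiguity cannot be removed using the intersection form of $M^*$ alone---indeed the automorphisms fixing that form already act transitively on the complements of the radical. The role of the action is precisely to break this symmetry: the Euler class, restricting to a generator on every normal sphere, is the extra invariant that rigidifies the choice. The most delicate point will be to check that the resulting $\tau$ is independent of the auxiliary representatives chosen, which I expect to reduce to the observation that two admissible choices differ by a class in $j^*(H^2(M^*; \Z))$, on which $\tau$ is forced to be $(j^*)^{-1}$.
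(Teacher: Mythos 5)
Your first two steps are sound---dualising \eqref{E:MVH2} to the homology sequence and identifying $\im \eta_*$ with the radical of the intersection form of $W = M^* \bs D^*(F)$ both work---but the crux of your argument, using the Euler class to rigidify the splitting, has a genuine gap. A section of $\eta^*$ amounts to a choice of lift $e_\ell \in H^2(M^* \bs F; \Z)$ of \emph{each} generator $x_\ell \in H^2(S^*(F); \Z) \cong \Z^n$, whereas $e$ only provides a lift of the single diagonal class $x_1 + \cdots + x_n$. The remaining ambiguity is a torsor over the kernel of the evaluation map $\Hom(\Z^n, H^2(M^*;\Z)) \to H^2(M^*;\Z)$, $\phi \mapsto \phi(1,\dots,1)$, which is nonzero whenever $n \geq 2$ and $b_2(M^*) > 0$; unimodularity cannot remove it (as you yourself note, the relevant isometries act transitively on complements of the radical), and your ``preferred Seifert-type dual classes'' are ambiguous by exactly the same torsor, so nothing in the proposal pins them down. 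Your closing remark does not rescue this: if two sections $s, s'$ differ by $j^* \circ \phi$, the associated retractions differ by $\phi \circ \eta^*$, which is nonzero in general, so the ambiguity does not cancel.

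There is a second, structural problem: any $\tau$ built from $e$ depends on the action, whereas the lemma and all of its later uses require $\tau$ to depend only on the pair $(M^*, F)$. Lemma \ref{L:commute} needs $\tau$ to be natural under any diffeomorphism with $\Phi(F_1) = F_2$ \emph{without} knowing $\vphi^*(e_2) = e_1$---that identity is precisely what is being proven in Theorem \ref{T:classification}\ref{equiv}, so your construction would make that argument circular---and in Theorem \ref{T:classification}\ref{exist} the section is used before any action exists, so no Euler class is available at all. Worse, in the one case where your recipe is essentially determined ($n=1$, complement $= \ker\langle e, \cdot\rangle$), one gets $\bar e = \tau(e) = 0$ identically, so the invariant of Theorem \ref{T:classification} would be vacuous; this is flatly contradicted by Example \ref{E:nonspin}, which exhibits countably many inequivalent semi-free actions on $\ntriv$ over the same pair $(M^*, F)$, distinguished by $\bar e$. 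The missing idea is geometric rather than algebraic: by the results of Haefliger and Chazin, circles in a simply connected $4$-manifold are unknotted and unlinked, so $F$ can be ambiently isotoped into a disjoint union $N$ of embedded $4$-disks. The paper defines $\tau = (\rho^*)^{-1} \circ \theta^*$ by passing through $H^2(M^* \bs N; \Z)$, where $\rho : M^* \bs N \to M^*$ induces an isomorphism on $H^2$, and canonicity follows from the uniqueness of such configurations up to ambient isotopy.
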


\begin{proof}
Let $N \In M^*$ be the union of $n$ mutually disjoint, embedded $4$-disks and let $F' \In N$ be the union of $n$ embedded circles such that each component of $N$ contains exactly one component of $F'$ (in its interior).  Observe, in particular, via a simple Mayer--Vietoris argument, that the inclusion $\rho : M^* \bs N \to M^*$ induces an isomorphism $\rho^* : H^2(M^*; \Z) \to H^2(M^* \bs N; \Z)$.

By the results of Haefliger \cite{haef} and Chazin \cite{chaz}, together with \cite[Chap.\ 8, Thm 1.3]{Hi}, there is an ambient isotopy $h : M^* \x [0,1] \to M^*$ such that $h_0 = \id_{M^*}$ and $h_1 (F) = F'$.  Moreover, being isotopic to the identity map, it is clear that the diffeomorphism $h_1$ induces the identity homomorphism on $H^2(M^*;Z)$.

Consider the commutative diagram
$$
\xymatrix{
 & M^* \bs F' \ar[r]^{h_1^{-1}} & M^* \bs F  \ar[dd]^j \\
M^* \bs N \ar[ur]^\theta \ar[dr]^{\rho} & & \\
 & M^* \ar[r]^{h_1^{-1}} & M^*
}
$$
where $\theta : M^* \bs N \to M^* \bs F'$ is the inclusion map.  The section $\tau$ can now be defined via
\beq
\label{E:section}
\tau := ((h_1^{-1} \circ \rho)^*)^{-1} \circ (h_1^{-1} \circ \theta)^* : H^2(M^* \bs F; \Z) \to H^2(M^*;\Z).
\eeq
In particular, it follows that 
$$
\tau =  (h_1^{-1})^* \circ (\rho^*)^{-1} \circ \theta^* \circ (h_1^{-1}|_{M^*\bs F'})^* = (\rho^*)^{-1} \circ \theta^* \circ (h_1^{-1}|_{M^*\bs F'})^*
$$ 
and, hence, that $\tau \circ j^* = \id_{H^2(M^*;\Z)}$ as desired.

The section $\tau$ is canonical in the sense that it depends only on $M^*$ and $F$ and not on the choice of either $N$ or $F'$.  Indeed, by \cite[Chap.\ 8, Thm.\ 3.2]{Hi} there is an ambient isotopy of $M^*$ which isotops $N$ to any other collection of $n$ disjoint, embedded disks.  Similarly, there is an ambient isotopy of $M^*$ deforming any other collection $\tilde F' \In N$ of $n$ embedded circles as above to $F'$ by first deforming to $F$ and then proceeding as before.
\end{proof}

It turns out that the canonical section behaves well in scenarios typical of equivariant actions.

\begin{lem}
\label{L:commute}
Suppose that, for $i = 1,2$, $M_i$ is a simply connected $5$-manifold on which $S^1$ acts semi-freely with orbit space $M_i^*$ and fixed-point set $F_i \In M_i$ consisting of $n_i$ circles.  Suppose further that $\Phi : M_1^* \to M_2^*$ is a diffeomorphism such that $\Phi(F_1) = F_2$ and $\vphi := \Phi|_{M_1^* \bs F_1}$.  Then 
$$
\tau_1 \circ \vphi^* = \Phi^* \circ \tau_2
$$
where $\tau_i : H^2(M_i^* \bs F_i ; \Z) \to H^2(M_i^*; \Z)$, $i = 1,2$, are the canonical sections from Lemma \ref{L:section}.
\end{lem}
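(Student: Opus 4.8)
The plan is to leverage the canonicity of the sections proved in Lemma~\ref{L:section}: because $\tau_2$ is independent of the auxiliary data used to construct it, I may take that data for $M_2^*$ to be the image under $\Phi$ of the data chosen for $M_1^*$, after which the identity reduces to a formal diagram chase.

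Concretely, following the proof of Lemma~\ref{L:section}, fix for $M_1^*$ a union $N_1 \In M_1^*$ of $n_1$ disjoint embedded $4$-disks, a collection $F_1' \In N_1$ of $n_1$ circles (one per component), and an ambient isotopy $h$ with $h_0 = \id_{M_1^*}$ and $h_1(F_1) = F_1'$; writing $g := h_1^{-1}$, the derived formula gives $\tau_1 = (\rho_1^*)^{-1} \circ \theta_1^* \circ (g|_{M_1^* \bs F_1'})^*$, where $\rho_1 : M_1^* \bs N_1 \to M_1^*$ and $\theta_1 : M_1^* \bs N_1 \to M_1^* \bs F_1'$ are the inclusions. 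Since $\Phi(F_1) = F_2$ forces $n_1 = n_2$, the sets $N_2 := \Phi(N_1)$ and $F_2' := \Phi(F_1')$ are admissible data for $M_2^*$, and $t \mapsto \Phi \circ h_t \circ \Phi^{-1}$ is an ambient isotopy of $M_2^*$ carrying $F_2$ to $F_2'$, with time-one inverse $\bar g := \Phi \circ g \circ \Phi^{-1}$. By canonicity, the section computed from these transported choices is precisely $\tau_2 = (\rho_2^*)^{-1} \circ \theta_2^* \circ (\bar g|_{M_2^* \bs F_2'})^*$.

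With these compatible choices the restrictions $\Phi|_{M_1^*\bs N_1}$, $\Phi|_{M_1^* \bs F_1'}$ and $\vphi = \Phi|_{M_1^*\bs F_1}$ are all diffeomorphisms, and $\Phi$ intertwines the two families of inclusions and isotopies strictly: $\rho_2 \circ \Phi|_{M_1^*\bs N_1} = \Phi \circ \rho_1$, $\theta_2 \circ \Phi|_{M_1^*\bs N_1} = \Phi|_{M_1^*\bs F_1'} \circ \theta_1$, and $\bar g \circ \Phi|_{M_1^*\bs F_1'} = \vphi \circ g$ (the last being immediate from $\bar g = \Phi \circ g \circ \Phi^{-1}$). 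Passing to cohomology and substituting these three relations into $\tau_1 \circ \vphi^* = (\rho_1^*)^{-1} \theta_1^* (g|)^* \vphi^*$ in turn — first the isotopy relation, then the $\theta$-relation, then the $\rho$-relation — rewrites it successively as $(\rho_1^*)^{-1}\theta_1^* (\Phi|_{M_1^*\bs F_1'})^* (\bar g|)^*$, then $(\rho_1^*)^{-1}(\Phi|_{M_1^*\bs N_1})^* \theta_2^* (\bar g|)^*$, and finally $\Phi^* (\rho_2^*)^{-1}\theta_2^*(\bar g|)^* = \Phi^* \circ \tau_2$, which is the desired identity.

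The real content is the appeal to canonicity, which is exactly what permits the data for $\tau_2$ to be taken as the $\Phi$-images of those for $\tau_1$ so that every square commutes on the nose; the diagram chase itself is then purely formal. Accordingly, the step requiring genuine care is verifying that the transported data — and in particular the conjugate isotopy $t \mapsto \Phi \circ h_t \circ \Phi^{-1}$ — satisfy all the requirements of Lemma~\ref{L:section}, so that the section they compute really is the canonical $\tau_2$.
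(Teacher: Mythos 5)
Your proof is correct and follows essentially the same route as the paper: both arguments use the canonicity established in Lemma \ref{L:section} to compute $\tau_2$ from the $\Phi$-transported data $N_2 = \Phi(N_1)$, after which the identity follows from a formal diagram chase with the intertwining relations $\Phi \circ \rho_1 = \rho_2 \circ \Phi|_{M_1^* \bs N_1}$ and the $\theta$-relation. The only (cosmetic) difference is that the paper first arranges, again by canonicity, that each component of $N_i$ already contains a component of $F_i$, so the auxiliary circles $F_i'$ and the conjugated isotopy drop out and the sections reduce to $\tau_i = (\rho_i^*)^{-1} \circ \theta_i^*$, shortening the chase.
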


\begin{proof}
Since the claim regards induced maps in cohomology, it may be assumed without loss of generality (by choosing appropriate ambient isotopies of $M_1^*$ and $M_2^*$ as in the proof of Lemma \ref{L:section}) that there are embedded disjoint unions $N_i \In M_i^*$ of $ n_i$ $4$-disks, $i = 1,2$, such that $N_2 = \Phi(N_1)$, $F_2 = \Phi(F_1)$, and each component of $N_i$ contains exactly one component of $F_i$.  In particular, $n_1 = n_2$.

In this setup, the inclusion maps $\rho_i : M_i^* \bs N_i \to M_i^*$ and $\theta_i : M_i^* \bs N_i \to M_i^* \bs F_i$, $i = 1,2$, satisfy $\vphi \circ \theta_1 = \theta_2 \circ \Phi|_{M_1^* \bs N_1}$ and $\Phi \circ \rho_1 = \rho_2 \circ \Phi|_{M_1^* \bs N_1}$, while the canonical sections defined by \eqref{E:section} are given by $\tau_i = (\rho_i^*)^{-1} \circ \theta_i^*$, $i = 1,2$.

It now follows that
\begin{align*}
\tau_1 \circ \vphi^* &=  (\rho_1^*)^{-1} \circ \theta_1^* \circ \vphi^* \\
&= (\rho_1^*)^{-1} \circ (\Phi|_{M_1^* \bs N_1})^* \circ \theta_2^* \\
&= \Phi^* \circ (\rho_2^*)^{-1} \circ \theta_2^* \\
&= \Phi^* \circ \tau_2
\end{align*}
as desired.
\end{proof}

Now that all the ingredients are in place, it is possible to prove Theorem \ref{T:classification}.

\begin{proof}[Proof of Theorem \ref{T:classification}]
Suppose that $M$ is a $5$-manifold on which $S^1$ acts semi-freely with orbit space $M^*$ and fixed-point set $F \In M$ consisting of $n$ circles.  Since $M$ is simply connected, so too are $M^*$ and $M^* \bs F$, by Proposition \ref{P:pi1} (and its proof).  
As in \eqref{E:MVH2}, the short exact sequence 
$$
0 \to H^2(M^*; \Z) \stackrel{j^*}{\lra} H^2(M^* \bs F; \Z) \stackrel{\eta^*}{\lra} H^2(S^*(F); \Z) \to 0 \,,
$$
splits, where $j : M^* \bs F \to M^*$ and $\eta: S^*(F) \to M^* \bs F$ are the respective inclusion maps and $S^*(F) \cong F \x \sph^2 \In (M^* \bs F) \cap D^*(F)$ is the normal sphere-bundle to $F \In M^*$.  

Let $\tau: H^2(M^*\bs F; \Z) \to H^2(M^*; \Z)$ be the canonical section given by Lemma \ref{L:section}.  If $e \in H^2(M^* \bs F ; \Z)$ is the Euler class of the principal bundle $S^1 \to M \bs F \to M^* \bs F$, then there is a canonical element $\bar e := \tau(e) \in H^2(M^*; \Z)$ associated to the action, as asserted in Theorem \ref{T:classification}\ref{setup}.

In order to prove Theorem \ref{T:classification}\ref{equiv}, suppose that, for $i = 1,2$, $M_i$ is a $5$-manifold on which $S^1$ acts semi-freely with orbit space $M_i^*$ and fixed-point set $F_i \In M_i$ consisting of $n_i$ circles.  By Theorem \ref{T:levine}, these actions are equivariantly diffeomorphic if and only if $n_1 = n_2$ (since $F_1$ must be diffeomorphic to $F_2$) and there is a diffeomorphism $\vphi : M_1^* \bs F_1 \to M_2^* \bs F_2$ such that the Euler classes $e_i \in H^2(M_i^* \bs F_i ; \Z)$, $i = 1, 2$, of the respective principal bundles $S^1 \to M_i \bs F_i \to M_i^* \bs F_i$, $i = 1, 2$, satisfy $\vphi^*(e_2) = e_1$. 

In this case, by Lemma \ref{L:difftype}, $\vphi$ can be extended to a diffeomorphism $\Phi : M_1^* \to M_2^*$ with $\Phi (F_1) = F_2$ and such that there is a commutative diagram (cf.\ \eqref{E:MVH2})
\beq
\label{E:diag}
\xymatrix{
0 \ar[r] & H^2(M^*_1; \Z) \ar[r]^(0.45){j_1^*} & H^2(M_1^* \bs F_1; \Z) \ar[r]^{\eta_1^*} \ar@/^1pc/@{.>}[l]|{\tau_1} & H^2(S^*(F_1); \Z) \ar[r] & 0 \\
0 \ar[r] & H^2(M^*_2; \Z) \ar[r]^(0.45){j_2^*} \ar[u]^{\Phi^*} & H^2(M_2^* \bs F_2; \Z) \ar[r]^{\eta_2^*} \ar[u]^{\vphi^*} \ar@/^1pc/@{.>}[l]|{\tau_2} & H^2(S^*(F_2); \Z) \ar[r] \ar[u]^{\vphi^*} & 0 \,.
}
\eeq
In particular, from Lemma \ref{L:commute} it follows that 
$$
\bar e_1 = \tau_1(e_1) = \tau_1(\vphi^*(e_2)) = \Phi^*(\tau_2^*(e_2)) = \Phi^*(\bar e_2)
$$
as desired.

Conversely, suppose that $n_1 = n_2 =: n$ and there is a diffeomorphism $\Psi : M_1^* \to M_2^*$ such that $\Psi^*(\bar e_2) = \bar e_1$.  By \cite{haef} and \cite{chaz}, there is an ambient isotopy $h : [0,1] \x M_1^* \to M_1^*$ such that $h_0 = \id_{M_1^*}$ and $h_1(F_1) = \Psi^{-1}(F_2)$.  Therefore, $\Phi := \Psi \circ h_1 : M_1^* \to M_2^*$ is a diffeomorphism satisfying $\Phi(F_1) = F_2$ and $\Phi^* = \Psi^* : H^2(M_2^* ; \Z) \to H^2(M_1^*; \Z)$.  Hence, $\Phi^*(\bar e_2) = \bar e_1$ and there is a commutative diagram as in \eqref{E:diag}.  By Theorem \ref{T:levine}, it remains only to demonstrate that the Euler classes $e_i \in H^2(M_i^* \bs F_i ; \Z)$, $i = 1, 2$, of the respective principal bundles $S^1 \to M_i \bs F_i \to M_i^* \bs F_i$, $i = 1, 2$, satisfy $\vphi^*(e_2) = e_1$, where $\vphi := \Phi|_{M_1^* \bs F_1}$.  

To this end, choose orientations on $M_1^*$ and on each component of $F_1$.  These choices induce an orientation on the normal bundle of each component of $F_1$ and, hence, on the fibres of the corresponding normal sphere bundles.  The diffeomorphism $\Phi$ induces orientations on $M_2^*$, the components of $F_2$ and the corresponding fibres of the normal sphere bundles, such that, in particular, the restrictions of $\Phi$ to $S^*(F_1)$ and to the normal $\sph^2$ fibres in $M_1^*$ are orientation preserving.

Since the normal sphere bundle $S^*(F_i)$, $i = 1,2$, is diffeomorphic to $F_i \x \sph^2$, there is a basis of generators $x_{i1}, \dots, x_{in} \in H^2(S^*(F_i) \; \Z) \cong \Z^n$, where each $x_{i\ell}$ is the Poincar\'e dual to the orientation class of the $\ell^\text{th}$ component of $F_i$ and corresponds to the given orientation of the fibre of $\ell^\text{th}$ component of $S^*(F)$.  By the choice of orientations and the naturality of Poincar\'e duality, it follows that $\vphi^*(x_{2\ell}) = x_{1\ell}$ for all $\ell \in \{1, \dots, n\}$.

Let $f_{i\ell} : \sph^2 \to S^*(F_i)$ denote the inclusion of a fibre of the $\ell^\text{th}$ component of $S^*(F_i)$.  By Theorem \ref{T:levine}, for each $i = 1,2$ the inclusion $\eta_i \circ f_{i\ell} : \sph^2 \to M_i^* \bs F_i$ pulls the Euler class $e_i \in H^2(M_i^* \bs F_i ; \Z)$ back to a generator $f_{i \ell}^* (\eta_i^*(e_i)) \in H^2(\sph^2; \Z)$, that is, to the Euler class of a Hopf fibration $S^1 \to \sph^3 \to \sph^2$.  In particular, the element $\eta_i^*(e_i) = \sum_{\ell = 1}^n x_{i \ell} \in H^2(S^*(F_i); \Z)$ is, with respect to the chosen orientations, the Euler class of the disjoint union of $n$ Hopf fibrations described by $S^1 \to S(F_i) \to S^*(F_i)$.

From the commutation relation $\vphi \circ \eta_1 = \eta_2 \circ \vphi$, it is now clear that
$$
\vphi^*(\eta_2^*(e_2)) = \vphi^* \left( \sum_{\ell = 1}^n x_{2 \ell} \right) 
= \sum_{\ell = 1}^n x_{1 \ell}
= \eta_1^*(e_1).
$$

As $\eta_i^* : H^2(M_i^* \bs F_i ; \Z) \to H^2(S^*(F_i); \Z)$, $i = 1,2$, is surjective, there is a canonical section $s_i : H^2(S^*(F_i); \Z) \to H^2(M_i^* \bs F_i ; \Z)$ arising from Lemma \ref{L:section}, defined in the obvious way via $s_i ( \eta_i^*(y)) := y - j_i^*(\tau_i(y))$, for $y \in H^2(M_i^* \bs F_i ; \Z)$.  Now, from
\begin{align*}
\vphi^*\circ s_2(\eta_2^*(y)) &= \vphi^*(y) - \vphi^* \circ j_2^*(\tau_2(y)) \\
&= \vphi^*(y) - j_1^* \circ \Phi^* (\tau_2(y)) \\
&= \vphi^*(y) - j_1^* \circ \tau_1 (\vphi^*(y)) \quad \text{ (by Lemma \ref{L:commute}) } \\
&= s_1(\eta_1^*(\vphi^*(y))) \\
&= s_1 \circ \vphi^* (\eta_2^*(y)),
\end{align*}
it follows that $s_1 \circ \vphi^* = \vphi^* \circ s_2$.  By combining all of the above observations one obtains
\begin{align*}
\vphi^*(e_2) &= \vphi^* \left( j_2^*(\bar e_2) + s_2(\eta_2^*(e_2)) \right) \\
&= j_1^* \left( \Phi^*(\bar e_2) \right) + s_1 (\vphi^*(\eta_2^*(e_2))) \\
&= j_1^* (\bar e_1) + s_1(\eta_1^*(e_1)) \\
&= e_1 ,
\end{align*}
as desired.

Finally, to prove Theorem \ref{T:classification}\ref{exist}, let $M^*$ be a $4$-manifold, let $\bar e \in H^2(M^*; \Z)$ and let $F \In M^*$ be a disjoint union of $n$ embedded circles.  As $M^*$ is simply connected, the Mayer--Vietoris sequence again yields a short exact sequence as in \eqref{E:MVH2} and a canonical section $\tau : H^2(M^* \bs F ; \Z) \to H^2(M^* ; \Z)$ as in Lemma \ref{L:section}.  As above, $\tau$ induces a canonical section $s :H^2(S^*(F); \Z) \to H^2(M^* \bs F ; \Z)$.  Choose a basis $x_1, \dots, x_n \in H^2(S^*(F); \Z)$ of generators corresponding to generators of the cohomology rings of the fibres of $S^*(F)$.  Let $\alpha := \sum_{\ell = 1}^n x_\ell$ and define $e := j^*(\bar e) + s(\alpha) \in H^2(M^* \bs F; \Z)$.  By Theorem \ref{T:levine}, there exists a semi-free circle action on a $5$-manifold $M$ with orbit space $M^*$ and fixed-point set $F$, such that the Euler class of the principal bundle $S^1 \to M \bs F \to M^* \bs F$ is this element $e \in H^2(M^* \bs F; \Z)$.
\end{proof}



\section{Semi-free \texorpdfstring{$S^3$}{S3}-actions on \texorpdfstring{$8$}{8}-manifolds}
\label{S:8mnfds}

In this section, a study of the restrictions placed on the topology of manifolds admitting semi-free $S^3$ actions with fixed-point components of codimension $8$ is initiated.  Such a manifold is of dimension at least $8$ and, in the case of minimal dimension, the fixed-point set consists of isolated points.  In this scenario, Theorem~\ref{T:8mnfds} demonstrates numerous obstructions to the existence of such an $S^3$ action.

Theorem \ref{T:8mnfds} will follow directly from the following lemmas.  For the sake of brevity, in each case the hypothesis that $M$ be a simply connected, closed, smooth $8$-manifold equipped with a semi-free $S^3$ action, with fixed-point set $F$ consisting of $n \in \N$ isolated points, will be suppressed.  The orbit space $M/S^3$ is a closed manifold which admits a canonical smooth structure and is, as usual, denoted by $M^*$.  By similar arguments to those in Proposition \ref{P:pi1}, it's easy to see that $M$ being simply connected is equivalent to $M^*$ being simply connected.  Hence, is a closed, smooth, simply connected $5$-manifold.  As such, it follows that $H^2(M^*; \Z)$ is free abelian, $H^j(M^*;\Z) = 0$ for $j = 1,4$, and the Betti numbers satisfy $b_2(M^*) = b_3(M^*)$.

Let $D(F) \In M$ be the union of $n$ disjoint, $S^3$-invariant $8$-disks centred at the fixed points in $M$ and let $D^*(F) \In M^*$ be its image under the quotient map $\pi : M \to M^*$.  Since the $S^3$ action is semi-free, $D^*(F)$ is a union of $n$ disjoint $5$-disks in $M^*$.

\begin{lem}
\label{L:orbitMV}
The integral cohomology groups of $M^* \bs F$ are given by
$$
H^j(M^* \bs F;\Z) \cong 
\begin{cases}
\Z, & j=0,\\
0, & j=1,5,\\
H^2(M^*;\Z), & j=2,\\
H^3(M^*;\Z), & j=3,\\
\Z^{n-1}, & j=4.
\end{cases}
$$
\end{lem}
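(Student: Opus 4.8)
The plan is to apply the Mayer--Vietoris sequence to the decomposition $M^* = (M^* \bs F) \cup D^*(F)$, in exactly the same spirit as the arguments in Section~\ref{S:mayervietoris}, feeding in everything already known about the orbit space $M^*$. Recall that $M^*$ is a simply connected, closed, oriented $5$-manifold with $H^2(M^*;\Z)$ free abelian, so Poincar\'e duality and the universal coefficient theorem give $H^0(M^*;\Z) = \Z$, $H^1(M^*;\Z)=0$, $H^3(M^*;\Z) \cong H^2(M^*;\Z)$, $H^4(M^*;\Z)=0$ and $H^5(M^*;\Z)=\Z$. Set $U := M^* \bs F$ and $V := D^*(F)$, where $V$ is a disjoint union of $n$ contractible $5$-disks, so that $H^*(V;\Z) = \Z^n$ is concentrated in degree $0$. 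The intersection $U \cap V = D^*(F) \bs F$ is a disjoint union of $n$ once-punctured $5$-disks, each deformation retracting onto its boundary sphere; hence $U \cap V \simeq \coprod_{j=1}^n \sph^4$ and $H^*(U \cap V;\Z) = \Z^n$ in degrees $0$ and $4$, and vanishes otherwise.

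Reading the Mayer--Vietoris sequence off degree by degree then determines $H^j(M^* \bs F;\Z)$ almost immediately. In degrees $1,2,3$ the cohomology of both $V$ and $U \cap V$ vanishes, so the inclusion-induced maps yield isomorphisms $H^1(M^* \bs F;\Z) \cong H^1(M^*;\Z) = 0$, $H^2(M^* \bs F;\Z) \cong H^2(M^*;\Z)$ and $H^3(M^* \bs F;\Z) \cong H^3(M^*;\Z)$, while connectivity of $U$ gives $H^0(M^* \bs F;\Z) = \Z$. The only genuinely interesting portion is around degree $4$, namely the exact segment
$$
0 \to H^4(M^* \bs F;\Z) \to H^4(U\cap V;\Z) \xrightarrow{\ \delta\ } H^5(M^*;\Z) \to H^5(M^* \bs F;\Z) \to 0,
$$
which reads $0 \to H^4(M^* \bs F;\Z) \to \Z^n \xrightarrow{\delta} \Z \to H^5(M^* \bs F;\Z) \to 0$.

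The single point that does not fall out of pure diagram-chasing is the vanishing of the top group $H^5(M^* \bs F;\Z)$, and this is where I expect the only real (albeit mild) obstacle. I would establish it by noting that $M^* \bs F$ deformation retracts onto the compact manifold-with-boundary $W := M^* \bs \mathring{D}^*(F)$, whose boundary $\partial W \cong \coprod_{j=1}^n \sph^4$ is nonempty. Such a compact, connected, oriented $5$-manifold with boundary has the homotopy type of a CW complex of dimension at most $4$; equivalently, by Lefschetz duality $H^5(W;\Z) \cong H_0(W, \partial W;\Z) = 0$, since $W$ is connected with nonempty boundary. Thus $H^5(M^* \bs F;\Z) = 0$, which forces the connecting map $\delta : \Z^n \to \Z$ to be surjective. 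Its kernel is then free abelian of rank $n-1$, so $H^4(M^* \bs F;\Z) \cong \Z^{n-1}$, completing the list of cohomology groups exactly as stated.
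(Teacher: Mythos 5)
Your proof is correct and takes essentially the same approach as the paper: the identical Mayer--Vietoris decomposition $M^* = (M^* \bs F) \cup D^*(F)$, with the one substantive point --- the vanishing of $H^5(M^* \bs F;\Z)$ --- handled in the paper simply by noting that $M^* \bs F$ is an open manifold, and by you via Lefschetz duality on the compact complement $W$, which amounts to the same standard fact. One small caveat: your incidental assertion that $H^3(M^*;\Z) \cong H^2(M^*;\Z)$ is false whenever $H_2(M^*;\Z)$ has torsion (e.g.\ the Wu manifold $\SU(3)/\SO(3)$, which can occur as an orbit space here --- the paper only claims equality of the Betti numbers $b_2(M^*) = b_3(M^*)$), but since this claim is never used in your argument, the proof stands.
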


\begin{proof}
Consider the decomposition $M^* = (M^*\bs F) \cup D^*(F)$, where $(M^*\bs F) \cap D^*(F) = D^*(F)\bs F$ is obviously homotopy equivalent to a union of $n$ disjoint copies of $\sph^4$.  Since $M^*\bs F$ is an open manifold, $H^5(M^*\bs F; \Z) = 0$.  The remaining integral cohomology groups can now be easily deduced from the Mayer--Vietoris sequence.
\end{proof}

\begin{lem}
\label{L:EulerClass}
The Euler class of the principal bundle $S^3 \to M\bs F \to M^* \bs F$ is a generator of $H^4(M^* \bs F; \Z) \cong \Z^{n-1}$ which pulls back to a generator of the principal $S^3$-bundle over each component of $M^* \bs F$.
\end{lem}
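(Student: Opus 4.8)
The plan is to study the bundle over the link of each fixed point in isolation and then assemble the local information by means of the Mayer--Vietoris sequence already set up in Lemma~\ref{L:orbitMV}. First I would identify the restriction of the principal bundle $S^3 \to M \bs F \to M^* \bs F$ to the normal sphere of each fixed point. As explained in Section~\ref{S:Prelim}, semi-freeness forces the slice representation at an isolated fixed point $p_\ell$ to be the standard free-off-the-origin orthogonal $S^3$-action on $T_{p_\ell}M \cong \HH^2$, whose unit sphere $\sph^7$ carries the free Hopf $S^3$-action with quotient $\hp^1 \cong \sph^4$. Recall from the set-up of Lemma~\ref{L:orbitMV} that $D^*(F) \bs F$ is homotopy equivalent to the disjoint union $\coprod_{\ell=1}^n \sph^4_\ell$ of these $n$ links. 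Hence the restriction of $S^3 \to M \bs F \to M^* \bs F$ to each $\sph^4_\ell$ is the quaternionic Hopf fibration $S^3 \to \sph^7 \to \sph^4$.

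Second, I would invoke the fact that the degree-four Euler class of a principal $S^3$-bundle is the pullback of the generator of $H^4(\B S^3; \Z) = H^4(\hp^\infty; \Z) \cong \Z$, and that the Hopf fibration is classified by the standard inclusion $\sph^4 = \hp^1 \hookrightarrow \hp^\infty$, which sends this generator to a generator of $H^4(\sph^4; \Z) \cong \Z$. Writing $i_\ell : \sph^4_\ell \hookrightarrow M^* \bs F$ for the inclusion of the $\ell$-th link and $e \in H^4(M^* \bs F; \Z)$ for the Euler class of the bundle, naturality then gives $i_\ell^* e = \pm u_\ell$, a generator of $H^4(\sph^4_\ell; \Z)$, for every $\ell$. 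This already proves the second assertion of the lemma.

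Third, I would feed this into the Mayer--Vietoris sequence for $M^* = (M^* \bs F) \cup D^*(F)$ used in Lemma~\ref{L:orbitMV}. Since $H^3(D^*(F) \bs F; \Z) = 0$, $H^4(M^*; \Z) = 0$ and $H^4(D^*(F); \Z) = 0$, the sequence shows that the restriction map $\rho = \bigoplus_{\ell} i_\ell^* : H^4(M^* \bs F; \Z) \to H^4(D^*(F) \bs F; \Z) \cong \Z^n$ is injective (its image being the kernel of the connecting map onto $H^5(M^*;\Z) \cong \Z$, which is what yields $H^4(M^* \bs F;\Z) \cong \Z^{n-1}$). Now $\rho(e) = (i_1^* e, \ldots, i_n^* e)$ has every coordinate equal to $\pm 1$, so its coordinates have greatest common divisor $1$ and $\rho(e)$ is a primitive vector in $\Z^n$. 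If $e$ were divisible, say $e = d\,e'$ with $d \geq 2$, then $\rho(e) = d\,\rho(e')$ would be divisible in $\Z^n$, a contradiction. Hence $e$ is primitive, that is, a generator of $H^4(M^* \bs F; \Z) \cong \Z^{n-1}$.

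I expect the only genuine point requiring care to be the correct reading of the word ``generator'': since $\Z^{n-1}$ admits no single generator for $n \geq 3$, the assertion must mean that $e$ is a \emph{primitive} (indivisible) element, and the crux is to reduce this to the elementary fact that a $\pm 1$-vector is primitive in $\Z^n$, combined with the injectivity of $\rho$ already visible from Lemma~\ref{L:orbitMV}. The identification of each link-bundle as the quaternionic Hopf fibration, and hence of its Euler class as a generator of $H^4(\sph^4)$, rests on the local analysis of Section~\ref{S:Prelim}, so the remaining cohomological bookkeeping is routine.
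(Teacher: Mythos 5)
Your proposal is correct and takes essentially the same approach as the paper: both use the Slice Theorem to identify the bundle over each punctured disk neighbourhood (equivalently, each link $\sph^4$) with the quaternionic Hopf fibration $S^3 \to \sph^7 \to \sph^4$, so that the Euler class restricts to a generator of $H^4(\sph^4;\Z)$, and then conclude that $e$ is a primitive element of $H^4(M^*\setminus F;\Z)\cong\Z^{n-1}$. The only difference is that the paper treats this last step as immediate while you justify it via the Mayer--Vietoris injectivity of the restriction map; in fact even that is more than needed, since a single restriction $i_\ell^*e=\pm 1$ already forbids any factorization $e=d\,e'$ with $d\geq 2$.
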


\begin{proof}
Let $p \in F \In M$ be a fixed point.  If $D(p) \In D(F)$ (resp.\ $D^*(p^*) \In D^*(F)$) denotes the component of $D(F)$ (resp.\ $D^*(F)$) containing $p$ (resp.\ $p^* = \pi(p) \in F \In M^*$), then the principal bundle $S^3 \to D(p)\bs \{p\} \to D^*(p^*)\bs \{p^*\}$ fits into the following pullback diagram of principal $S^3$-bundles
$$
\xymatrix{
S^3 \ar[d] &  S^3 \ar[d] & S^3 \ar[d] \\
D(p)\bs \{p\}  \ar[d] \ar[r]^{\hat \iota} & M \bs F \ar[d] \ar[r] & ES^3 \ar[d] \\
D^*(p^*)\bs \{p^*\} \ar[r]_{\iota} & M^* \bs F \ar[r]_\vphi & BS^3
}
$$
where $\vphi : M^* \bs F \to BS^3$ is the classifying map, and the maps $\hat \iota : D(p)\bs \{p\} \to M\bs F$ and $\iota : D^*(p^*)\bs \{p^*\} \to M^*\bs F$ are the inclusions.

Since $H^*(BS^3;\Z) = \Z[x]$, where $\deg(x) = 4$, the Euler class of any principal $S^3$-bundle is given by the pullback of $x$ under the corresponding classifying map.  By the Slice Theorem, the free action of $S^3$ on $D(p)\bs \{p\}$ is equivalent to the free, linear $S^3$ action on $\disk^8 \bs \{0\}$.  Therefore, the bundle $S^3 \to D(p)\bs \{p\} \to D^*(p^*)\bs \{p^*\}$ is homotopy equivalent to the Hopf fibration $S^3 \to \sph^7 \to \sph^4$ and, as such, its Euler class $(\vphi \circ \iota)^*(x) = (\iota)^* (\vphi^*(x))$ is a generator of $H^4(D^*(p^*)\bs \{p^*\}; \Z) \cong \Z$.  As an immediate consequence, the Euler class $\vphi^*(x) \in H^4(M^* \bs F; \Z)$ for the bundle $S^3 \to M\bs F \to M^* \bs F$ must also be a generator.
\end{proof}

\begin{lem}
\label{L:cohomM}
The integral cohomology groups of $M$ are given by
$$
H^j(M;\Z) \cong 
\begin{cases}
\Z, & j=0,8\\
0, & j=1,7,\\
H^2(M^*;\Z), & j=2,5\\
H^3(M^*;\Z), & j=3,6\\
\Z^{n-2}, & j=4.
\end{cases}
$$
In particular, the Euler characteristic of $M$ is given by $\chi(M) = n$.
\end{lem}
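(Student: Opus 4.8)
The plan is to determine $H^*(M;\Z)$ by chaining together two long exact sequences. First I would run the Gysin sequence of the principal bundle $S^3 \to M\bs F \to M^*\bs F$ to compute $H^*(M\bs F;\Z)$ from the cohomology of $M^*\bs F$ recorded in Lemma \ref{L:orbitMV}. Then I would feed the result into the Mayer--Vietoris sequence for the decomposition $M = (M\bs F)\cup D(F)$, where $D(F)\simeq \coprod_n \disk^8$ is contractible on each component and the overlap $(M\bs F)\cap D(F) = D(F)\bs F$ is homotopy equivalent to $\coprod_n \sph^7$.

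For the Gysin step, write $B := M^*\bs F$ and let $e\in H^4(B)$ be the Euler class. Since the fibre is $\sph^3$, the sequence reads
\[
\cdots \to H^{i-4}(B)\xrightarrow{\,\cup e\,} H^i(B)\xrightarrow{\pi^*} H^i(M\bs F)\to H^{i-3}(B)\xrightarrow{\,\cup e\,} H^{i+1}(B)\to\cdots.
\]
As $B$ is an open $5$-manifold, $H^j(B)=0$ for $j\geq 5$, so the only potentially nonzero cup-product map is $\cup e: H^0(B)=\Z \to H^4(B)\cong\Z^{n-1}$, $1\mapsto e$. The crux is to identify its image precisely: by Lemma \ref{L:EulerClass}, $e$ restricts to a generator of $H^4(\sph^4)\cong\Z$ on each of the $n$ linking spheres, so under the injective restriction map $H^4(B)\hookrightarrow H^4(\coprod_n\sph^4)\cong\Z^n$ coming from the Mayer--Vietoris computation behind Lemma \ref{L:orbitMV}, $e$ is sent to a vector all of whose entries are $\pm 1$; hence $e$ is indivisible in $\Z^{n-1}$. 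Consequently $\cup e$ is injective with \emph{free} cokernel $\Z^{n-2}$, and reading off the sequence degree by degree yields $H^i(M\bs F;\Z) \cong \Z,\,0,\,H^2(M^*),\,H^3(M^*),\,\Z^{n-2},\,H^2(M^*),\,H^3(M^*),\,\Z^{n-1},\,0$ for $i=0,\dots,8$.

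Finally, since $\sph^7$ has reduced cohomology concentrated in degree $7$, the Mayer--Vietoris sequence for $M=(M\bs F)\cup D(F)$ splits into isomorphisms $H^i(M;\Z)\cong H^i(M\bs F;\Z)$ in the range $2\leq i\leq 6$, giving exactly the asserted groups there. The groups in degrees $0,1,7,8$ follow from $M$ being a closed, connected, oriented (as it is simply connected) $8$-manifold, via Poincar\'e duality: $H^0=H^8\cong\Z$ and $H^1=H^7=0$. The computation of $\chi(M)$ is then a bookkeeping exercise: using $b_3(M^*)=b_2(M^*)$, the alternating sum of Betti numbers collapses to $\chi(M)=1+(n-2)+1=n$. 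I expect the only genuinely delicate point to be the primitivity of $e$ in the preceding paragraph: without it, the cokernel of $\cup e$ could a priori carry torsion and the degree-$4$ group of $M$ would fail to be the free group $\Z^{n-2}$ demanded by the statement, so it is precisely here that Lemma \ref{L:EulerClass} is indispensable.
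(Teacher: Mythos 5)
Your proposal is correct and follows essentially the same route as the paper: the Gysin sequence of $S^3 \to M\bs F \to M^*\bs F$ (with Lemma \ref{L:EulerClass} supplying the primitivity of $e$ that makes the cokernel of $\smile e$ the free group $\Z^{n-2}$), followed by Mayer--Vietoris for $M = (M\bs F)\cup D(F)$. Your explicit justification of indivisibility via the injective restriction to the linking $\sph^4$'s, and your use of Poincar\'e duality for degrees $0,1,7,8$, are only cosmetic variations on the paper's argument.
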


\begin{proof}
By Lemma \ref{L:EulerClass}, the map $H^0(M^* \bs F; \Z) \stackrel{\smile e}{\lra}  H^4(M^* \bs F; \Z)$ in the Gysin sequence for the principal $S^3$-bundle $S^3 \to M\bs F \to M^* \bs F$ must be injective and generate a $\Z$-summand of $H^4(M^* \bs F; \Z) \cong \Z^{n-1}$.  From the short exact sequence
$$
0 \to H^0(M^* \bs F; \Z) \stackrel{\smile e}{\lra}  H^4(M^* \bs F; \Z) \to H^4(M \bs F; \Z) \to 0
$$
it follows that $H^4(M \bs F;\Z) \cong \Z^{n-2}$.  All other integral cohomology groups of $M \bs F$ can be read off from the Gysin sequence, yielding
$$
H^j(M \bs F;\Z) \cong 
\begin{cases}
\Z, & j=0,\\
0, & j=1,8,\\
H^2(M^*;\Z), & j=2,5\\
H^3(M^*;\Z), & j=3,6\\
\Z^{n-2}, & j=4,\\
\Z^{n-1}, & j=7.
\end{cases}
$$

With this in hand, the Mayer--Vietoris sequence for $M$ with respect to the decomposition $(M \bs F) \cup D(F)$, where $(M \bs F) \cap D(F) = D(F) \bs F$ is homotopy equivalent to a disjoint union of $n$ copies of $\sph^7$, easily yields the cohomology groups of $M$.

In order to determine the Euler characteristic of $M$, it clearly suffices to know the integral cohomology groups.  Alternatively, as a circle subgroup $S^1 \In S^3$ must have the same fixed-point set, the Euler characteristic can be determined directly via $\chi(M) = \chi(F)$.  Either way, one obtains $\chi(M) = n$, as desired.
\end{proof}

\begin{lem}
\label{L:CharClasses}
The Pontrjagin classes, $p_1(M)$ and $p_2(M)$, the $\hat A$-genus and the signature of $M$ are trivial.  Furthermore, $\chi(M) = n$ is even.
\end{lem}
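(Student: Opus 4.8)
The plan is to prove the vanishing of the Pontrjagin classes $p_1(M)$ and $p_2(M)$ first, and then to deduce the remaining assertions — the vanishing of the $\hat A$-genus and the signature, and the evenness of $\chi(M) = n$ — as formal consequences of the characteristic class formulas in dimension eight.

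First I would show that $p_1(M) = 0$. The free part $M \bs F$ is the total space of the principal bundle $S^3 \to M \bs F \xrightarrow{\pi} M^* \bs F$, so its tangent bundle splits as $T(M \bs F) = \mc V \oplus \pi^* T(M^* \bs F)$, where the vertical distribution $\mc V$ is trivialized by the fundamental vector fields of the action. The Whitney sum formula therefore gives $p_1(M \bs F) = \pi^* p_1(M^* \bs F)$. Since $M^*$ is a simply connected closed $5$-manifold, $H^4(M^*; \Z) = 0$, so $p_1(M^*) = 0$ and hence, restricting along $M^* \bs F \hookrightarrow M^*$, also $p_1(M^* \bs F) = 0$; thus $p_1(M \bs F) = 0$. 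It remains to observe that the inclusion $M \bs F \hookrightarrow M$ induces an isomorphism on $H^4(-; \Z)$: this is read off from the Mayer--Vietoris sequence for $M = (M \bs F) \cup D(F)$ already employed in the proof of Lemma \ref{L:cohomM}, as $D(F)$ is a union of contractible disks and $(M \bs F) \cap D(F)$ is homotopy equivalent to a disjoint union of $7$-spheres. As the restriction of $p_1(M)$ to $M \bs F$ equals $p_1(M \bs F) = 0$ and the restriction map is injective, $p_1(M) = 0$.

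The main obstacle is to show that the signature $\sigma(M)$ vanishes, and for this I would pass to the circle subgroup $S^1 \In S^3$, which acts on $M$ semi-freely with the same fixed-point set $F$, a finite set of $n$ isolated points. Semi-freeness forces the isotropy representation on each tangent space $T_pM \cong \C^4$ to have all four weights equal to $1$, independently of $p \in F$. By the Atiyah--Singer $G$-signature theorem, for $g \in S^1$ the equivariant signature $\sigma(g, M)$ is a sum of one local contribution per fixed point, each of the form $\epsilon_p \left( \tfrac{g+1}{g-1} \right)^4$ with $\epsilon_p = \pm 1$ recording whether the complex orientation at $p$ agrees with the orientation of $M$. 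On the other hand, since $S^1$ is connected it acts trivially on $H^*(M)$, so $\sigma(g, M) = \sigma(M)$ is independent of $g$. Equating the two expressions forces the coefficient of the non-constant function $\left( \tfrac{g+1}{g-1} \right)^4$ to vanish, that is, $\sum_{p \in F} \epsilon_p = 0$; in particular $\sigma(M) = 0$. Note that I am deliberately \emph{not} arguing that the signature equals that of the $0$-dimensional fixed-point set, which would be false here — the point is rather that the common transcendental local factor must drop out.

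The remaining statements follow formally. Using $p_1(M) = 0$ together with $\sigma(M) = 0$, the Hirzebruch signature theorem in dimension eight, $\sigma(M) = \tfrac{1}{45}\left( 7\, p_2(M) - p_1(M)^2 \right)[M]$, yields $p_2(M)[M] = 0$, and since $H^8(M; \Z) \cong \Z$ is torsion free this gives $p_2(M) = 0$. The $\hat A$-genus in dimension eight is $\hat A(M) = \tfrac{1}{5760}\left( 7\, p_1(M)^2 - 4\, p_2(M) \right)[M]$, which vanishes as $p_1(M) = p_2(M) = 0$. Finally, for any closed oriented $8$-manifold one has $\chi(M) \equiv \sigma(M) \pmod 2$, since both reductions equal $b_4(M) \bmod 2$; as $\chi(M) = n$ by Lemma \ref{L:cohomM} and $\sigma(M) = 0$, it follows that $n$ is even. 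This evenness is of course also visible directly from the relation $\sum_{p \in F} \epsilon_p = 0$ established above, which pairs the fixed points according to their orientation sign.
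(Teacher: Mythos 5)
Your proof is correct, but it takes a genuinely different route for part of the lemma, so it is worth comparing with the paper's. Your argument for $p_1(M)=0$ is exactly the paper's: the splitting $T(M\bs F) = \mc V \oplus \pi^* T(M^* \bs F)$ with $\mc V$ trivialized by the action, the vanishing $H^4(M^*;\Z) = 0$, and the Mayer--Vietoris isomorphism $\hat\jmath^* : H^4(M;\Z) \to H^4(M\bs F;\Z)$; likewise your deduction of $p_2(M)=0$ from the Signature Theorem and the parity argument $\chi(M) \equiv \sigma(M) \equiv b_4(M) \pmod 2$ match the paper. The difference lies in the signature and $\hat A$-genus: the paper simply cites Corollaries 1 and 2 of Mayer \cite{mayer} (see also \cite{atiyahhirzebruch}, \cite{uchida}) for both vanishings, whereas you prove $\sigma(M)=0$ directly. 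You restrict to a circle $S^1 \In S^3$, observe that semi-freeness gives $M^g = F$ for every $g \neq 1$ with all four rotation angles at each fixed point equal (the tangent representation being two copies of the Hopf representation $\HH$, with the orientation discrepancy absorbed into your sign $\epsilon_p$ --- your write-up handles this correctly), and apply the Atiyah--Singer $G$-signature theorem: connectedness of $S^1$ makes $\sigma(g,M) = \sigma(M)$ constant in $g$, while the fixed-point formula exhibits it as $\bigl(\sum_{p \in F} \epsilon_p\bigr)\cot^4(\theta/2)$, a multiple of a non-constant function of $g = e^{i\theta}$, forcing both $\sum_p \epsilon_p = 0$ and $\sigma(M) = 0$. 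You then obtain the $\hat A$-genus from $p_1(M) = p_2(M) = 0$ and the degree-eight $\hat A$-polynomial, rather than by citation. What each approach buys: the paper's is shorter on the page but leans on external results whose proofs are of the same nature as your argument; yours is self-contained modulo the $G$-signature theorem, in effect reproving the special case of Mayer's result that is needed, and it yields the extra relation $\sum_{p\in F} \epsilon_p = 0$, which pairs the fixed points by sign and re-derives the evenness of $n$ independently of the intersection-form count.
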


\begin{proof}
It suffices to show that $p_1(M) = 0$.  Indeed, by Corollaries 1 and 2 of \cite{mayer} (see also \cite{atiyahhirzebruch} and \cite{uchida}), any closed, smooth, simply connected $8$-manifold admitting a semi-free $S^3$ action with isolated fixed points has trivial signature and $\hat A$-genus.  If $p_1(M) = 0$, then it follows from the Signature Theorem (or the $\hat A$-genus) that $p_2(M) = 0$.  Moreover, since $\dim(M) = 8$, the intersection form is a symmetric, non-degenerate, bilinear form, hence diagonalizable (over $\R$) with $b_4(M)$ equal to the total number of eigenvalues, all of which are non-trivial.  As the signature of $M$ is trivial, the intersection form must have the same number of positive and negative eigenvalues.  Therefore, by Lemma \ref{L:cohomM}, $\chi(M) = n = b_4(M) + 2$ is even.

In order to show that $p_1(M) = 0$, consider the commutative diagram
$$
\xymatrix{
M \bs F \ar[d]_\pi \ar[r]^{\hat \jmath} & M \ar[d]^\pi \\
M^* \bs F \ar[r]_ \jmath & M^*
}
$$
where $\pi$ is the quotient map and $\hat \jmath$, $ \jmath$ are the respective inclusion maps.  Since $T(M^* \bs F) = \jmath^*(T M^*)$ and $H^4(M^*; \Z) = 0$, it follows that $p_1(M^* \bs F) = \jmath^*(p_1(M^*)) = 0$.  

On the other hand, $T(M \bs F) = \mcal V \oplus \pi^*(T(M^* \bs F))$, where $\mcal V$ is the vertical distribution of the principal bundle $S^3 \to M \bs F \stackrel{\pi}{\lra} M^* \bs F$.  In particular, $\mcal V$ is parallelizable.  As $H^4(M \bs F; \Z)$ is free abelian, the Whitney sum formula for Pontrjagin classes yields
\begin{align*}
p_1(M \bs F) 
&= p_1(\mcal V) + p_1(\pi^*(T(M^* \bs F))) \\
&= \pi^*(p_1(M^* \bs F)) = 0.
\end{align*}

Finally, from the Mayer--Vietoris sequence for $M = (M \bs F) \cup D(F)$, one sees that $\hat \jmath^* : H^4(M; \Z) \to H^4(M \bs F; \Z)$ is an isomorphism.  Together with $\hat \jmath^*(T M) = T(M \bs F)$, it then follows that $p_1(M) = (\hat \jmath^*)^{-1}(p_1(M \bs F)) = 0$, as desired.
\end{proof}

\begin{lem}
\label{L:Spin}
$M$ is spin if and only if $M^*$ is spin.
\end{lem}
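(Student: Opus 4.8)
The plan is to mirror the argument used for the $5$-dimensional spin statement, by transferring the question from $M$ and $M^*$ to the principal-bundle complements $M \bs F$ and $M^* \bs F$, where the projection $\pi$ relates the two tangent bundles directly. First I would introduce the inclusions $\hat \jmath : M \bs F \to M$ and $j : M^* \bs F \to M^*$ and observe that, since the tangent bundles of the complements are the restrictions of those of $M$ and $M^*$, one has $w_2(M \bs F) = \hat \jmath^*\, w_2(M)$ and $w_2(M^* \bs F) = j^*\, w_2(M^*)$. Thus it suffices to understand these restriction maps on $H^2(\,\cdot\,;\Z_2)$ together with the behaviour of $w_2$ under $\pi$.

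Next I would run the Mayer--Vietoris sequences with $\Z_2$ coefficients for the decompositions $M = (M \bs F) \cup D(F)$ and $M^* = (M^* \bs F) \cup D^*(F)$. The key point is that, as the fixed points are isolated, the overlaps $D(F) \bs F$ and $D^*(F) \bs F$ are homotopy equivalent to disjoint unions of $n$ copies of $\sph^7$ and of $\sph^4$, respectively, each of which has vanishing $\Z_2$-cohomology in degrees $1$ and $2$; moreover $D(F)$ and $D^*(F)$ are unions of disks. Consequently the relevant segments of both sequences force $\hat \jmath^* : H^2(M;\Z_2) \to H^2(M \bs F;\Z_2)$ and $j^* : H^2(M^*;\Z_2) \to H^2(M^* \bs F;\Z_2)$ to be \emph{isomorphisms}. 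Hence $M$ is spin if and only if $M \bs F$ is, and $M^*$ is spin if and only if $M^* \bs F$ is. I would then use $T(M \bs F) = \mc V \oplus \pi^*\,T(M^* \bs F)$, where the vertical distribution $\mc V$ of the principal bundle $S^3 \to M \bs F \stackrel{\pi}{\lra} M^* \bs F$ is parallelizable, so that the Whitney sum formula gives $w_2(M \bs F) = \pi^*\, w_2(M^* \bs F)$.

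The final step is to analyse $\pi^* : H^2(M^* \bs F;\Z_2) \to H^2(M \bs F;\Z_2)$ via the Gysin sequence of the oriented $\sph^3$-bundle, exactly as in Lemmas \ref{L:orbitMV} and \ref{L:cohomM}. Here lies the essential difference from, and simplification relative to, the circle case: because the Euler class of an $S^3$-bundle lives in $H^4$, the Gysin maps into and out of $H^2$ come from $H^{-2}$ and $H^{-1}$ of the base and therefore vanish, so $\pi^*$ is an isomorphism on $H^2(\,\cdot\,;\Z_2)$ rather than having a nontrivial $\Z_2$ kernel. In particular $w_2(M \bs F) = \pi^*\, w_2(M^* \bs F)$ vanishes if and only if $w_2(M^* \bs F)$ does. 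Chaining the equivalences then yields $M$ spin $\iff w_2(M \bs F) = 0 \iff w_2(M^* \bs F) = 0 \iff M^*$ spin. I expect no genuine obstacle: the delicate comparison $w_2(M^* \bs F) \neq w_2(\pi)$ that was required in the $5$-manifold argument is unnecessary here precisely because the $S^3$-bundle contributes nothing to $H^2$, so the only care needed is in verifying the two Mayer--Vietoris isomorphisms in degree two.
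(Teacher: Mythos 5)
Your proposal is correct and follows essentially the same route as the paper's proof: Mayer--Vietoris isomorphisms for $\hat\jmath^*$ and $j^*$ on $H^2(\,\cdot\,;\Z_2)$, the Gysin-sequence argument showing $\pi^*$ is an isomorphism in degree two, and the Whitney sum formula applied to $T(M \bs F) = \mc V \oplus \pi^*\,T(M^* \bs F)$. Your closing remark is also accurate: precisely because the Euler class of the $S^3$-bundle lies in degree four, the comparison of $w_2$ with the bundle class that was needed in the circle case disappears here, which is why the paper's argument (and yours) is shorter.
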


\begin{proof} 
Consider again the commutative diagram from the proof of Lemma \ref{L:CharClasses}.  From the Mayer--Vietoris sequences for $M^* = (M^* \bs F) \cup D^*(F)$ and $M = (M \bs F) \cup D(F)$, it follows that $\jmath^* : H^2(M^*; \Z_2) \to H^2(M^* \bs F; \Z_2)$ und $\hat \jmath^* : H^2(M; \Z_2) \to H^2(M \bs F; \Z_2)$ are isomorphisms.  Similarly, from the Gysin sequence for the principal bundle $S^3 \to M\bs F \stackrel{\pi}{\lra} M^* \bs F$, one obtains that $\pi^* : H^2(M^* \bs F; \Z_2) \to H^2(M \bs F; \Z_2)$ is an isomorphism.

From the discussion of the tangent bundles in the proof of Lemma \ref{L:CharClasses} and the Whitney sum formula for Stiefel--Whitney classes, one obtains $w_2(M \bs F) = \hat \jmath^* w_2(M)$ and
\begin{align*}
w_2(M \bs F) &= w_2(\pi^*(T(M^* \bs F))) \\
&= \pi^* (w_2(M^* \bs F)) \\
&= \pi^* (\jmath^*(w_2(M^*))). 
\end{align*}

Therefore, $w_2(M) = (\hat \jmath^*)^{-1} \circ \pi^* \circ \jmath^* (w_2(M^*))$ and, since $(\hat \jmath^*)^{-1} \circ \pi^* \circ \jmath^* : H^2(M^*; \Z_2) \to H^2(M; \Z_2)$ is an isomorphism, it follows that $M$ is spin if and only if $M^*$ is spin.
\end{proof}

\section{Examples}
\label{S:examples}

It is perhaps illuminating to exhibit examples and constructions which illustrate the statements of Theorems \ref{T:totalspace}, \ref{T:classification} and \ref{T:8mnfds}.  This is the purpose of the present section.  See \cite{churchlamotke} for additional discussion.  The classification by Freedman of $4$-manifolds up to homeomorphism \cite{Fr} will be used freely without comment.

\begin{example}[Semi-free circle actions on $\sph^5$]
\label{E:S5}

By Theorem \ref{T:totalspace}, any semi-free circle action on $\sph^5$ with codimension-$4$ fixed-point set must fix precisely one circle in $\sph^5$ and have orbit space $M^*$ homeomorphic to $\sph^4$.  By Theorem \ref{T:classification}, there should be, up to equivariant diffeomorphism, a unique such circle action on $\sph^5$ with orbit space $M^*$.  Therefore, equivariant diffeomorphism classes of semi-free circle actions on $\sph^5$ with codimension-$4$ fixed-point set are in one-to-one correspondence with smooth manifolds homeomorphic to $\sph^4$.

To obtain the standard sphere $\sph^4$, consider $\sph^5 \In \C^3$ equipped with the semi-free circle action induced from the following action on $\C^3$:
$$
S^1 \x \C^3 \to \C^3 \,;\, (w, (x,y,z)) \mapsto w \cdot (x,y,z) = (w x, w y, z).
$$
An equivalent action is given by
$$
w \cdot (x,y,z) = (w x, \bar w y, z),
$$
where the equivalence is via the orientation-preserving diffeomorphism 
$$
\C^3 \to \C^3 \,;\, (x,y,z) \mapsto (x, \bar y, \bar z).
$$
Observe, however, that the induced diffeomorphism on the quotient pair $(\sph^4, F) = (\sph^4, \sph^1)$ preserves the orientation of $\sph^4$, while reversing that of the fixed-point set $\sph^1$.  The Euler classes of the corresponding principal $S^1$-bundles $S^1 \to \sph^5 \bs F \to \sph^4 \bs F$ will then have opposite signs.  In fact, by Theorem \ref{T:levine}, these Euler classes correspond to the generators $\pm 1$ of $H^2(\sph^4 \bs F; \Z) \cong \Z$.
\end{example}


\subsection{Semi-free circle actions on \texorpdfstring{$\sph^3$}{S3}-bundles over \texorpdfstring{$\sph^2$}{S2}}
\label{SS:bundleexamples}
{\ }

According to Theorem \ref{T:totalspace}, any semi-free circle action with codimension-four fixed-point set on either $\triv$ or $\ntriv$ must have $(n,k) \in \{(2,0), (1,1)\}$ and, hence, orbit space homeomorphic to $\sph^4$ for $k = 0$ and to $\cp^2$ for $k = 1$.  Moreover, being spin, an orbit space homeomorphic to $\sph^4$ can only occur as the orbit space of an action on $\triv$, whereas a manifold homeomorphic to $\cp^2$ is necessarily not spin and, hence, can only arise from an action on $\ntriv$.  Furthermore, Theorem \ref{T:classification} yields that, up to equivariant diffeomorphism, each manifold homeomorphic to $\sph^4$ is obtained from a unique action on $\triv$ and each manifold homeomorphic to $\cp^2$ is given by a one-parameter family of actions on $\ntriv$.

In order to provide explicit examples of semi-free circle actions on $\triv$ and $\ntriv$, it is convenient to consider $T^2$ actions on $\sss$ which have only two orbit types: orbits having circle isotropy, the union of which is a submanifold of codimension $4$, and orbits with trivial isotropy.  The manifolds $\triv$ and $\ntriv$ will be obtained as quotients of $\sss$ under a free circle subaction and inherit an induced semi-free $S^1$ action with fixed-point set of codimension $4$.  Although the orbit space $M^*$ under each such action is homeomorphic to either $\sph^4$ or $\cp^2$, it is not clear whether $M^*$, equipped with its canonical smooth structure, is diffeomorphic to $\sph^4$ or $\cp^2$, respectively.  Thus, even though we expect the examples below to comprise a complete list of semi-free circle actions on $\triv$ and $\ntriv$ with orbit space diffeomorphic to $\sph^4$ and $\cp^2$, respectively, it remains possible that exotic smooth structures are obtained on the orbit spaces.

Elements of $\sss$ will be considered as pairs $(q_1, q_2)^{\mathrm t}$ of unit quaternions, where $q_m = u_m + v_m j \in \sph^3 \In \H$ with $u_m, v_m \in \C$, $|u_m|^2 + |v_m|^2 = 1$, for $m = 1,2$.  

\begin{example}[Semi-free circle actions on $\triv$]
\label{E:spin}

Suppose that the $T^2$ action on $\sss$ is free except at points lying on the tori $\{(u_1, u_2)^{\mathrm t}\} \In (\sss) \cap \C^2$ and $\{(u_1, v_2 j )^{\mathrm t}\} \In (\sss) \cap \C \oplus \C j$, along which there is circle isotropy. Up to equivariant diffeomorphism and reparametrizations of the torus, the action of $T^2$ on $\sss$ can then be given by
$$
(w,z) \cdot \bpm q_1 \\ q_2 \epm 
=
\bpm    u_1 + w \, v_1 j \\
z  \, u_2 + z w \, v_2 j \epm.
$$

The only circle subgroups of $T^2$ which act freely on $\sss$ are $S^1_{(0,1)} := \{(1, z)\}$ and $S^1_{(-2,1)} := \{(\bar z^2, z)\}$ and their conjugates, where the action of $S^1_{(a,b)} \In T^2$ on $\sss$ is via
$$
z \cdot \bpm q_1 \\ q_2 \epm =
\bpm u_1 +  z^a v_1 j \\
z^b \, u_2 + z^{a+b} v_2 j \epm.
$$
From the Gysin sequence, together with the Barden--Smale classification \cite{barden, smale}, it is clear that the quotient $(\sss)/S^1_{(a,b)}$, $(a,b) \in \{ \pm (0,1), \pm(-2,1) \}$, under each of these free actions is either $\triv$ or $\ntriv$.  By the work of DeVito in \cite{DV} (see also \cite{GGK}, \cite{kerin}), the second Stiefel--Whitney class of $(\sss)/S^1_{(a,b)}$ is given by the mod-$2$ reduction of the sum $a + b + (a+b) = 2(a+b)$ of the exponents of $z$, that is,  $w_2((\sss)/S^1_{(a,b)}) = 0$ in each case.  Therefore, each quotient is spin and, hence, diffeomorphic to $\triv$.

The circles $S^1_{(1,0)} := \{(w,1) \} \In T^2$ and $S^1_{(a,b)}$, $(a,b) \in \{ \pm (0,1), \pm(-2,1) \}$, together generate (the homology of) the torus $T^2$.  Therefore, in each case the $T^2$ action on $\sss$ induces a semi-free circle action on $\triv$ given by
$$
w \cdot \bbm q_1 \\ q_2 \ebm =
\bbm  u_1 +  w \, v_1 j \\
u_2 + w \, v_2 j \ebm
$$
with fixed-point set $F$ consisting of the two circles $\{[u_1, u_2]^{\mathrm t} \}$ and $\{[u_1, v_2 j]^{\mathrm t} \} \In \ntriv$.  As discussed above, the orbit space $M^* = (\sss)/T^2 = (\triv)/S^1_{(1,0)}$ is homeo\-morphic to $\sph^4$.

By Theorem \ref{T:classification}, these four induced semi-free circle actions on $\triv$ are equivariantly diffeomorphic.  Moreover, by Levine \cite{levine} (see Theorem \ref{T:levine}), the Euler classes of the principal bundles $\sph^1_{(1,0)} \to (\triv) \bs F \to M^* \bs F$ can take only the values $\pm (1, \pm 1) \in H^2(M^* \bs F; \Z) \cong \Z^2$.  It is possible to pass from one to the other via diffeomorphisms of $M^*$ which interchange and/or reverse the orientation of the fixed-point components, as appropriate.
\end{example}


\begin{example}[Countably many inequivalent semi-free circle actions on $\ntriv$]
\label{E:nonspin}

Suppose now that the $T^2$ action on $\sss$ has circle isotropy at points on the torus $\{(u_1,u_2)^{\mathrm t}\} \In (\sss) \cap \C^2$ and is free elsewhere. Therefore, up to equivariant diffeo\-morphism and reparametrization of the torus, the action can be described by
$$
(w,z) \cdot \bpm q_1 \\ q_2 \epm =
\bpm z \, u_1 +  w \, v_1 j \\
z \, u_2 + z w \, v_2 j \epm.
$$
Let $M^*$ denote the orbit space $(\sss)/T^2$.  Now, for any $m \in \Z$, the circle $S^1_{(m,1)} := \{(z^m,z)\} \In T^2$ acts freely on $\sss$ via
$$
z \cdot \bpm q_1 \\ q_2 \epm =
\bpm z  \, u_1 +  z^m v_1 j \\
z \, u_2 + z^{m+1} v_2 j \epm.
$$
As in the previous example, it is clear that the quotient $(\sss)/S^1_{(m,1)}$ under this free action is either $\triv$ or $\ntriv$ and, again by \cite{DV}, the second Stiefel--Whitney class of $(\sss)/S^1_{(m,1)}$ is given by the mod-$2$ reduction of the sum $1 + m + 1 + (m+1) = 2m + 3$, that is,  $w_2((\sss)/S^1_{(m,1)}) = 1$.  Thus, not being spin, the quotient is $\ntriv$ for every $m \in \Z$.

The circles $S^1_{(1,0)} := \{(w,1) \} \In T^2$ and $S^1_{(m,1)}$ together generate (the homology of) the torus $T^2$.  Therefore, for each $m \in \Z$, the $T^2$ action on $\sss$ induces a semi-free circle action on $\ntriv$ given by
$$
w \cdot \bbm q_1 \\ q_2 \ebm =
\bbm  u_1 +  w \, v_1 j \\
u_2 + w \, v_2 j \ebm
$$
and fixing the circle $F = \{[u_1, u_2]^{\mathrm t} \} \In \ntriv$.  As discussed above, the orbit space $M^* = (\sss)/T^2 = (\ntriv)/S^1_{(1,0)}$ is homeomorphic to $\cp^2$.  However, it will turn out that these induced semi-free actions are mutually inequivalent. Indeed, if $e \in H^2(M^* \bs F; \Z) \cong H^2(M^*; \Z) \oplus \Z \cong \Z^2$ denotes the  Euler class of the principal bundle $S^1_{(1,0)} \to (\ntriv) \bs F \to M^* \bs F$ then, as a consequence of Levine's work \cite{levine} (see Theorem \ref{T:levine}), it has the form $e = (e_m, \pm 1)^{\mathrm t}$, where $e_m \in H^2(M^*; \Z)$ possibly differs from the canonical element indicated by Theorem \ref{T:classification} due to the choice of splitting.  It will be shown below that $e_m$ is linear in the parameter $m \in \Z $ (so that the inequivalence of the actions actually follows Theorem \ref{T:levine}).

For convenience, let $P = (\sss) \bs \{(u_1, u_2)^{\mathrm t}\} \In \sss$ denote the preimage of $(\ntriv) \bs F$ under the quotient map $\sss \to (\sss)/S^1_{(m,1)} \cong \ntriv$.   Since $S^1_{(m,1)}$ is a normal subgroup of $T^2$, the quotient homomorphism $\rho_m : T^2 \to T^2/S^1_{(m,1)} \cong S^1_{(1,0)}$ induces a map $B\rho_m : BT^2 \to B(T^2/S^1_{(m,1)}) \cong BS^1$ on classifying spaces, such that the principal bundle $S^1_{(1,0)} \to (\ntriv) \bs F \to M^* \bs F$ is the pullback of the universal bundle under the map $B \rho_m \circ \vphi : M^* \bs F \to B S^1$, where $\vphi : M^* \bs F \to B T^2$ is the classifying map of the principal bundle $T^2 \to P \to P/T^2 \cong M^* \bs F$.  Hence, the Euler class $e = (e_m, \pm 1)^{\mathrm t}$ is, up to sign, the pullback of a generator of $H^2(BS^1; \Z) \cong \Z$ under the induced map $\vphi^* \circ (B \rho_m)^* : H^2(B S^1; \Z) \to H^2(M^* \bs F; \Z)$.

With respect to the basis given by the product decomposition $T^2 \cong S^1_{(1,0)} \x S^1_{(0,1)}$, the map $i_* : \pi_1(S^1_{(m,1)}) \to \pi_1(T^2)$ induced by the inclusion $i : S^1_{(m,1)} \to T^2$ can be written as $i_* = (m,1)^{\mathrm t} : \Z \to \Z^2$.  From the long exact sequence for the fibration $S^1_{(m,1)} \to T^2 \to T^2 / S^1_{(m,1)}$ it then follows that $(\rho_m)_* : \pi_1(T^2) \to \pi_1(T^2 / S^1_{(m,1)})$ can be written as $(\rho_m)_* = \ve (1, -m) : \Z^2 \to \Z$, for some $\ve \in \{ \pm 1\}$, which in turn implies that $(B \rho_m)^* : H^2(BS^1; \Z) \to H^2(BT^2)$ is given by $(B \rho_m)^* = \ve (1, -m)^{\mathrm t} : \Z \to \Z^2$ (with respect to the induced basis).

With respect to a fixed basis of $H^2(M^* \bs F ; \Z)$ given by the direct-sum decomposition $H^2(M^* \bs F; \Z) \cong H^2(M^*; \Z) \oplus \Z \cong \Z^2$, there is, since $P$ is $2$-connected, a matrix $\left(\bsm a & b \\ c & d \esm\right) \in \SL_2(\Z)$ describing the isomorphism $\vphi^* : H^2(BT^2 ; \Z) \to H^2(M^* \bs F ; \Z)$ induced from the classifying map $\vphi : P/T^2 \cong M^* \bs F \to BT^2$.  Consequently, the Euler class has the form
$$
e = \bpm e_m \\ \pm 1 \epm
=  \ve \bpm a & b \\ c & d \epm \bpm 1 \\ -m \epm
= \ve \bpm a - b m \\ c - d m \epm \in H^2(M^* \bs F; \Z).
$$ 
However, since $\vphi^* \in \SL_2(\Z)$ and the second entry of $e$ is independent of $m$, it may be concluded that $b = c = \pm \ve$ and $d = 0$, hence, that 
$$
e = \bpm e_m \\ \pm 1 \epm 
= \bpm \ve a \mp m \\ \pm 1 \epm \in H^2(M^* \bs F; \Z).
$$
Observe, finally, that the diffeomorphism group $\Diff(M^*)$ can, at worst, change the sign of $e_m \in H^2(M^*; \Z)$.  Therefore, by Theorem \ref{T:classification}, this countable family of pairwise-inequivalent semi-free circle actions on $\ntriv$ with fixed-point set of codimension four comprises all such actions on $\ntriv$ with orbit space $M^*$, up to equivariant diffeo\-morphism.
\end{example}

\begin{rem}
Note that, while the examples of semi-free circle actions on $\triv$ and $\ntriv$ are induced by $\T^2$ actions on $\sss$, semi-free circle actions on connected sums of $\triv$ and $\ntriv$ will not lift to $T^2$ actions on a connected sum of copies of $\sss$ in a similar way.  Indeed, by the Gysin sequence, no $2$-connected $6$-manifold can be the total space of a principal $S^1$-bundle over a connected sum of copies of $\triv$ and $\ntriv$ having at least two summands.
\end{rem}


\subsection{Semi-free circle actions on connected sums}
\label{SS:connected sums}
{\ }

For $i = 1, 2$, let $M_i$ be a $5$-manifold admitting a semi-free circle action with fixed-point set $F_i \In M_i$ of codimension $4$.  Let $n_i \in \N$ be the number of components of $F_i$ and let $p_i \in F_i$ be a fixed point.  In particular, the isotropy representation of $S^1$ on $T_{p_1} M_1$ is equivalent to that on $T_{p_2} M_2$, since the action on the unit tangent $4$-sphere in each case is simply the suspension of the Hopf action on $\sph^3$.

Therefore, it is possible to perform an equivariant connected sum by removing $S^1$-invariant neighbourhoods (disks) of $p_1 \in M_1$ and $p_2 \in M_2$ and gluing along the boundaries via an equivariant diffeomorphism (being careful with orientations).  In so doing, one obtains a semi-free $S^1$ action on $M = M_1 \# M_2$ with codimension-$4$ fixed-point set consisting of $n = n_1 + n_2 - 1$ components.  To see this, note that removing $S^1$-invariant disks around $p_1$ and $p_2$ involves removing arcs from the component (circle) of $F_1$ and $F_2$ containing each point.  The equivariant-connected-sum operation then glues the remaining parts of these two circles along their boundaries, thus forming a single fixed circle in $M$.  The orbit space $M^*$ of this semi-free action on $M$ is clearly seen to be $M_1^* \# M_2^*$.  Recall, moreover, that the behaviour of Stiefel--Whitney classes of positive degree is well understood and, in particular, $w_2(M) = (w_2 (M_1), w_2(M_2)) \in H^2(M; \Z_2) \cong H^2(M_1; \Z_2) \oplus H^2(M_2; \Z_2)$, that is, $M$ is spin if and only if both $M_1$ and $M_2$ are spin.

By taking equivariant connected sums of the manifolds given in Examples \ref{E:spin} and \ref{E:nonspin}, one obtains examples of semi-free circle actions on all of the spaces given in Theorem \ref{T:totalspace}.

\begin{prop}
\label{P:connsum}
	For any $m_1, m_2 \in \N \cup \{0\}$, a connected sum $M$ of $m_1$ copies of $\triv$ and $m_2$ copies of $\ntriv$ admits a semi-free circle action with fixed-point set consisting of $m_1 + 1$ circles.  Moreover, $M$ is spin if and only if $m_2 = 0$.
	\end{prop}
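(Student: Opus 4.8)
The plan is to construct the action on $M$ by iterating the equivariant-connected-sum operation described above on the explicit building blocks provided by Examples \ref{E:S5}, \ref{E:spin} and \ref{E:nonspin}. These examples furnish, respectively, a semi-free circle action with codimension-$4$ fixed-point set on $\sph^5$ fixing a single circle, on $\triv$ fixing two circles, and on $\ntriv$ fixing a single circle. At every fixed point of each of these actions the isotropy representation is the suspension of the Hopf action on $\sph^3$, so, as observed above, an equivariant connected sum may be formed between any two of them.

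First I would take the $\sph^5$ action of Example \ref{E:S5} as a base and perform equivariant connected sums with $m_1$ copies of the $\triv$ action of Example \ref{E:spin} and $m_2$ copies of the $\ntriv$ action of Example \ref{E:nonspin}. Since connected sum with the standard sphere leaves the diffeomorphism type unchanged, and since the construction realises $M_1 \# M_2$ as the total space (with orbit space $M_1^* \# M_2^*$), the resulting manifold is exactly $(\#^{m_1}\,\triv)\#(\#^{m_2}\,\ntriv)$, the manifold $M$ of the statement. Anchoring the construction at $\sph^5$ also disposes of the degenerate case $m_1 = m_2 = 0$, in which $M = \sph^5$.

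It then remains only to count the fixed circles and to verify the spin assertion, both immediate from the properties of the construction described above. Each equivariant connected sum fuses one fixed circle of each summand into a single fixed circle, so attaching a copy of $\triv$ (with its two fixed circles) raises the total fixed-point count by one, whereas attaching a copy of $\ntriv$ (with its one fixed circle) leaves the count unchanged. Starting from the single fixed circle of $\sph^5$ and attaching $m_1$ copies of $\triv$ and $m_2$ copies of $\ntriv$ thus produces $1 + m_1 = m_1 + 1$ fixed circles, as claimed. For the spin statement, recall that $w_2$ of an equivariant connected sum is the direct sum of the classes $w_2$ of the summands, whence $M$ is spin precisely when every summand is spin; as $\sph^5$ and $\triv$ are spin while $\ntriv$ is not, this occurs if and only if $m_2 = 0$.

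Since the substantive work lies entirely in the cited examples and in the equivariant-connected-sum construction, I do not anticipate a genuine obstacle. The only point demanding care is the bookkeeping of the fixed-point count together with the degenerate case $m_1 = m_2 = 0$, both of which are handled cleanly by always using the $\sph^5$ action of Example \ref{E:S5} as the base of the iterated connected sum.
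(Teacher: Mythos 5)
Your proposal is correct and follows essentially the same route as the paper: equivariant connected sums of the actions from Examples \ref{E:S5}, \ref{E:spin} and \ref{E:nonspin}, with the spin claim settled by the additivity of $w_2$ under connected sum. The only (cosmetic) difference is that you anchor every sum at the $\sph^5$ action, whereas the paper uses Example \ref{E:S5} only for the degenerate case $m_1 = m_2 = 0$ and otherwise sums the $\triv$ and $\ntriv$ pieces directly; the fixed-point bookkeeping comes out the same either way.
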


\begin{proof}
	If $m_1 = m_2 = 0$, then Example \ref{E:S5} gives the desired space and action.  If $m_1 + m_2 > 0$ then, by taking an equivariant connected sum of $m_1$ copies of Example \ref{E:spin} and $m_2$ copies of Example \ref{E:nonspin}, one obtains a manifold $M$ and action as in the statement. That $M$ is spin if and only if $m_2 = 0$ follows directly from the above remarks on Stiefel--Whitney classes. 
\end{proof}

Let $M = M_1 \# M_2$ be an equivariant connected sum as described above.  The usual map from $M$ to the wedge sum $M_1 \vee M_2$ induces an isomorphism on cohomology in degree two and, by construction, maps the fixed-point set $F \In M$ to $F_1 \vee F_2 \In M_1 \vee M_2$, where $F_1 \vee F_2$ denotes the wedge sum of a single component of $F_1$ with a single component of $F_2$.  Now, the image of $M  \bs F$ in $M_1 \vee M_2$ is clearly the disjoint union of $M_1 \bs F_1$ with $M_2 \bs F_2$.  It then follows easily that the canonical element $\bar e \in H^2(M^*; \Z)$ from Theorem \ref{T:classification} which characterizes the semi-free circle on $M$ with fixed-point set $F$ and orbit space $M^*$ is given by $(\bar e_1, \bar e_2) \in H^2(M_1; \Z) \oplus H^2(M_2; \Z)$, where, for $i = 1,2$, $\bar e_i \in H^2(M_i; \Z)$ is the corresponding canonical element characterizing the semi-free action on $M_i^*$ with fixed-point set $F_i$ and orbit space $M_i^*$.

Observe, finally, that the examples constructed by taking equivariant connected sums as above cannot exhaust all possible actions.  Indeed, by \cite{levine} (see Theorem \ref{T:levine}), all $4$-manifolds arise as the orbit space of a semi-free circle action on a $5$-manifold, such that the fixed-point set has codimension four.  However, taking equivariant connected sums of Examples \ref{E:S5}--\ref{E:nonspin} yields only orbit spaces which are homeomorphic to connected sums of copies of $\cp^2$.  In particular, an orbit space homeomorphic to $\sph^2 \x \sph^2$ is never achieved.  This will be remedied in the next class of examples.


\subsection{Semi-free circle actions on fibre sums}
\label{SS:fibresums}
{\ }

Suppose two $5$-manifolds $M_1$ and $M_2$ are equipped with arbitrary effective circle actions.  Although the orbit spaces $M_1^*$ and $M_2^*$ will not, in general, be topological manifolds, it is well known that the principal isotropy subgroup of each action is trivial.  Then, by the Slice Theorem, for each $i = 1,2$ an $S^1$-invariant $\ve$-neighbourhood $\nu_\ve (S^1 \cdot p_i)$ of a principal orbit $S^1 \cdot p_i \In M_i$ is equivariantly diffeomorphic to $\sph^1 \x \disk^4$ (with $S^1$ acting trivially on the second factor).

By reversing the orientation of the $S^1$ action on $M_2$ if necessary, one can therefore glue the complements of two such invariant neighbourhoods via an orientation-reversing diffeo\-morphism of their boundaries to obtain a new manifold
$$
M = (M_1 \bs \nu_\ve (S^1 \cdot p_1)) \cup_\partial (M_2 \bs \nu_\ve (S^1 \cdot p_2))
$$ 
on which $S^1$ also acts effectively, thus obtaining the \emph{equivariant fibre sum} $M$.  In particular, notice that the quotient of $\nu_\ve(S^1 \cdot p_i) \In M_i$, $i = 1,2$, under the respective circle action is diffeomorphic to $\disk^4$, from which it follows that the orbit space $M^*$ of the induced circle action on $M$ is given by $M_1^* \# M_2^*$.

Suppose now that the circle action on $M_1$ is free and that the action on $M_2$ is semi-free with fixed-point set $F$ of codimension four.  Then the induced action on the equivariant fibre sum $M$ is also semi-free with fixed-point set $F$ and orbit space the topological manifold $M^* = M_1^* \# M_2^*$ (which admits a canonical smooth structure).

\begin{prop}
\label{P:fibresums}
If $B$ is a closed, smooth, simply connected $4$-manifold, then, for every $n \in \N$, there exists a closed, smooth, simply connected $5$-manifold $M$ on which $S^1$ acts smoothly and semi-freely with fixed-point set $F$ consisting of $n$ circles and orbit space $M^*$ homeomorphic to $B$.
\end{prop}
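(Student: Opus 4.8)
The plan is to realize $B$ (up to homeomorphism) as an orbit space by combining the two constructions of this section: I would take the equivariant fibre sum of a \emph{free} circle action whose orbit space is $B$ with a \emph{semi-free} circle action whose orbit space is a manifold homeomorphic to $\sph^4$ and which carries the required $n$ fixed circles. The point is that fibre-summing with a free action leaves the fixed-point set unchanged but forms the connected sum of orbit spaces, so summing against a homotopy $\sph^4$ preserves the homeomorphism type of $B$ while transplanting the $n$ fixed circles.

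First I would produce the semi-free summand. By Proposition~\ref{P:connsum} (with $m_1 = n-1$ and $m_2 = 0$, using the convention that the empty connected sum is $\sph^5$ as in Example~\ref{E:S5}), the spin $5$-manifold $M_2 := \#_{j=1}^{n-1}(\triv)$ admits a semi-free circle action with fixed-point set $F$ consisting of $n$ circles of codimension four. By Theorem~\ref{T:totalspace}, $b_2(M_2) = n-1 = n + b_2(M_2^*) - 1$, whence $b_2(M_2^*) = 0$; since $M_2^*$ is simply connected (Proposition~\ref{P:pi1}) and has vanishing intersection form, Freedman's classification \cite{Fr} shows that $M_2^*$ is homeomorphic to $\sph^4$. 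Next I would produce the free summand: for any closed, smooth, simply connected $4$-manifold $B$ there is a principal $S^1$-bundle $S^1 \to M_1 \to B$ (for instance the trivial bundle $M_1 = \sph^1 \x B$), whose total space is a closed, smooth $5$-manifold carrying a free — hence semi-free and fixed-point-free — circle action with orbit space $M_1^* = B$. Note that $M_1$ itself need not be simply connected.

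Now I would form the equivariant fibre sum $M$ of $M_1$ and $M_2$, reversing the orientation of one action if necessary, exactly as described above. By that construction, $M$ is a closed, smooth $5$-manifold equipped with a semi-free circle action whose fixed-point set is again $F$, a disjoint union of $n$ circles, and whose orbit space is the connected sum $M^* = M_1^* \# M_2^*$. Since $M_2^*$ is homeomorphic to $\sph^4$, connected-summing with it does not change the homeomorphism type, so $M^*$ is homeomorphic to $B \# \sph^4 \cong B$. Finally, as $M^*$ is homeomorphic to the simply connected manifold $B$, it is simply connected, and therefore $M$ is simply connected by Proposition~\ref{P:pi1}. This yields the desired manifold and action.

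The only point requiring genuine care is the simple connectivity of $M$: the free summand $M_1$ may fail to be simply connected, which might seem to obstruct $\pi_1(M) = 0$. This is precisely where one must invoke the orbit-space criterion of Proposition~\ref{P:pi1} — so that simple connectivity of $M$ is deduced from that of $M^* \cong B$ — rather than attempting any direct van~Kampen computation involving $M_1$. Everything else is a routine assembly of the fibre-sum construction together with the Betti-number bookkeeping furnished by Theorem~\ref{T:totalspace}.
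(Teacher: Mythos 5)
Your proof is correct and takes essentially the same route as the paper: both form the equivariant fibre sum of the semi-free action on $\#_{j=1}^{n-1}(\triv)$ with $n$ fixed circles (from Example \ref{E:S5} and Proposition \ref{P:connsum}) with a principal $S^1$-bundle over $B$, obtaining orbit space $\sph^4 \# B$, which is homeomorphic to $B$. The only cosmetic differences are that you re-derive the homeomorphism of the semi-free summand's orbit space with $\sph^4$ via Theorem \ref{T:totalspace} and Freedman's classification (the paper simply reads it off from the construction in Examples \ref{E:S5} and \ref{E:spin}), and that you make explicit the appeal to Proposition \ref{P:pi1} for the simple connectivity of $M$, which the paper leaves implicit.
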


\begin{proof}
Recall from Example \ref{E:S5} and Proposition \ref{P:connsum} that $M_1 = \#_{j=1}^{n-1} (\triv)$ admits a semi-free circle action with fixed-point set $F$ consisting of $n$ circles.  Let, on the other hand, $M_2$ be the total space of a principal $S^1$-bundle over $B$.  Then the equivariant fibre sum $M$ of $M_1$ and $M_2$ admits a semi-free circle action with fixed-point set $F$ and orbit space homeomorphic to $\sph^4 \# B$, that is, homeomorphic to $B$.
\end{proof}

Notice that, by Theorem \ref{T:totalspace} (and Proposition \ref{P:pi1}), the diffeomorphism type of the manifold $M$ in Proposition \ref{P:fibresums} is determined by whether $B$ is spin or not.  Suppose now that $M$ is an $(m+1)$-fold connected sum $M$ of $\sph^3$-bundles over $\sph^2$, for some $m \in \N$.  Then, if $M$ is not spin, Proposition \ref{P:connsum} implies that $M$ admits a semi-free circle action with fixed-point set $F$ consisting of $n$ circles, for all $n \in \{1, \dots, m\}$.  

On the other hand, if $M$ is spin, then $M^*$ is a smooth, spin $4$-manifold.  By Rokhlin's Theorem \cite{rokhlin}, the intersection form of $M^*$ must have signature divisible by $16$ and, hence, $b_2(M^*)$ must be even.  Then Theorem \ref{T:totalspace} and Propositions \ref{P:connsum} and \ref{P:fibresums} together imply that $M$ admits a semi-free circle action with fixed-point set $F$ consisting of $n$ circles, for all $n \in \{ m - 2(j - 1) \mid 0 \leq j \leq \lfloor \frac{m+1}{2} \rfloor \}$.


\subsection{Semi-free \texorpdfstring{$S^3$}{S3} actions in dimension \texorpdfstring{$8$}{8}}
\label{SS:dim8egs}
{\ }

Without a good understanding of $8$-manifolds, it seems very difficult to construct explicit examples of $8$-manifolds admitting explicit semi-free $S^3$ actions with isolated fixed points, even though Church and Lamotke \cite{churchlamotke} provide a algorithm for constructing such spaces over any given orbit space.  Nevertheless, the following examples should give a good impression of what is to be expected.

Suppose $M$ is an $8$-manifold with $H^2(M; \Z) = H^3(M; \Z) = 0$ which admits a semi-free $S^3$ action with only isolated fixed points.  Then the orbit space $M^*$, equipped with its canonical smooth structure, is diffeomorphic to $\sph^5$.  Indeed, from Theorem \ref{T:8mnfds}, it follows that $H^*(M^*;\Z) = H^*(\sph^5;\Z)$.  Now, by collapsing the complement of a neighbourhood of a point in $M^*$, one obtains a map $f:M^* \to \sph^5$ of degree $1$.  By the Hurewicz and Whitehead theorems, it follows that $M^*$ is a homotopy $5$-sphere.  By the $h$-cobordism theorem, $M^*$ is homeomorphic to $\sph^5$ and, as there are no exotic spheres in dimension $5$, one concludes that $M^* \cong \sph^5$.

Taking advantage of the Hopf action, such semi-free actions are, in fact, easy to find:
\begin{enumerate}
\item The standard sphere $\sph^8 \In \R \x \HH^2$ can be written as
$$
\sph^8 = \{(s,x_1,x_2)^{\mathrm t} \mid s \in \R, \ x_1, x_2 \in \HH,\ s^2 + |x_1|^2 + |x_2|^2 = 1 \} .
$$
Then $S^3$ acts semi-freely on $\sph^8$ via
$$
q \cdot \bpm s \\ x_1 \\ x_2 \epm = \bpm s \\ q \, x_1 \\  q \, x_2 \epm
$$
with fixed-point set $F = \{(\pm 1, 0, 0)^{\mathrm t} \} \In \sph^8 $ consisting of $2 = \chi(\sph^8)$ points.

\item The standard $\sph^4 \In \R \x \HH$ is given by
$$
\sph^4 = \{ (s,x)^{\mathrm t} \mid s \in \R, \ x \in \HH,\ s^2 + |x|^2 = 1 \}.
$$
The action of $S^3$ on $\sph^4 \x \sph^4$ described by 
$$
q \cdot \left( \bpm s_1 \\ x_1 \epm, \bpm s_2 \\ x_2 \epm \right) 
= \left( \bpm t_1 \\ qx_1 \epm, \bpm t_2 \\ qx_2 \epm \right)$$
is semi-free with $4 = \chi(\sph^4 \x \sph^4)$ fixed points.
\item Suppose that, for $j = 1,2$, $M_j$ is an $8$-manifold admitting a semi-free $S^3$ action with fixed-point set consisting of $n_j = \chi(M_j)$ points.  Then, by removing (as for semi-free circle actions) an $S^3$-invariant neighbourhood of a fixed point in each space and gluing equivariantly along the boundaries, one obtains an equivariant connected sum $M = M_1 \# M_2$, such that the resulting semi-free $S^3$ action on $M$ has $n_1 + n_2 - 2 = \chi(M)$ fixed points.  Applying this construction to multiple copies of the previous example yields, for each $m \in \N$, a semi-free $S^3$ action on $\#_{j=1}^m (\sph^4 \x \sph^4)$ with $2m+2$ fixed points.
\end{enumerate}

As mentioned in the introduction, the work of Church and Lamotke \cite{churchlamotke} ensures that this is a complete classification up to equivariant homeomorphism of all semi-free actions on $8$-manifolds having  isolated fixed points and $\sph^5$ as orbit space.

To obtain more interesting orbit spaces, one can use an equivariant-fibre-sum construction analogous to that for semi-free circle actions.  This is essentially the construction suggested by \cite{churchlamotke}.  In the present case, one considers a principal bundle $S^3 \to P \to B$ over a $5$-manifold $B$ and an (equivariant) connected sum $\#_{j=1}^m (\sph^4 \x \sph^4)$, with $m \in \N \cup \{0\}$.  Removing an $S^3$-invariant neighbourhood of a principal orbit from each and gluing along the boundaries yields a manifold $M$ which admits a semi-free $S^3$ action with $2m+2$ fixed points and orbit space $M^*$ diffeomorphic to $\sph^* \# B \cong B$.  Moreover, up to equivariant homeomorphism, all actions are obtained in this way.

Without further information, however, this is a rather unsatisfactory construction.  Indeed, it yields an explicit description of neither the manifold $M$ nor the semi-free $S^3$ action upon it.  For example, it is unknown whether $M = (\sph^2 \x \sph^6) \# (\sph^3 \x \sph^5)$ admits a semi-free $S^3$ action with isolated fixed points, even though Theorem \ref{T:8mnfds} ensures that the individual summands do not.  Theorem \ref{T:8mnfds} and \cite{barden} would together imply that the orbit space of such an action is $M^* = \sph^3 \x \sph^2$ and that the number of fixed points is $2 = \chi(M)$.  By the previous paragraph, $M$ would then have to be (equivariantly) homeomorphic to the fibre sum of $\sph^8$, equipped with its semi-free action with orbit space $\sph^5$, and the trivial principal $S^3$-bundle $\sph^3 \x \sph^5$.


\end{document}